\numberwithin{equation}{section}
\numberwithin{figure}{section}
\numberwithin{table}{section}
\crefname{section}{Section}{Sections}
\Crefname{section}{Section}{Sections}
\setlist[enumerate, 1]{label = (\roman*), font = \upshape}
\setlist[itemize, 2]{label = {$\circ$}}
\theoremstyle{plain}
\newtheorem{theorem}{Theorem}[section]
\newtheorem{corollary}[theorem]{Corollary}
\newtheorem{lemma}[theorem]{Lemma}
\newtheorem{proposition}[theorem]{Proposition}
\theoremstyle{definition}
\newtheorem{conjecture}[theorem]{Conjecture}
\newtheorem{example}[theorem]{Example}
\theoremstyle{remark}
\newtheorem{remark}[theorem]{Remark}
\newtheorem{claim}[theorem]{Claim}
\DeclareMathOperator{\ZZ}{\mathbb{Z}}
\DeclareMathOperator{\QQ}{\mathbb{Q}}
\DeclareMathOperator{\rank}{rank}
\DeclareMathOperator{\gt}{>}
\newcommand{\func}{{\mbox{{\sf func}}}}
\newcommand{\rk}{\mbox{\rm rank}}
\newcommand{\ve}[1]{\mathbf{#1}}
\newcommand{\cT}{\mbox{$\cal T$}}
\newcommand{\cM}{\mbox{$\cal M$}}
\newcommand{\cR}{\mbox{$\cal R$}}
\newcommand{\cV}{\mbox{$\cal V$}}
\newcommand{\cW}{\mbox{$\cal W$}}
\newcommand{\R}{\mathbb R}
\begin{document}

\title{Codes with symmetric distances}

\date{\today}

\author{
 G\'{a}bor Heged\"{u}s\thanks{Institute of Applied Mathematics, \'Obuda University}
\and
 Sho Suda\thanks{Department of Mathematics, National Defense Academy of Japan}
\and
 Ziqing Xiang\thanks{Department of Mathematics and National Center For Applied Mathematics Shenzhen, Southern University of Science and Technology}
}

\maketitle

\vspace*{-1cm}

\abstract{
For a code $C$ in a space with maximal distance $n$, we say that $C$ has symmetric distances if its distance set $S(C)$ is symmetric with respect to $n / 2$. In this paper, we prove that if $C$ is a binary code with length $2n$, constant weight $n$ and symmetric distances, then 
\[
 |C| \leq \binom{2 n - 1}{|S(C)|}.
\]
This result can be interpreted using the language of Johnson association schemes. More generally, we give a framework to study codes with symmetric distances in Q-bipartite Q-polynomial association schemes, and provide upper bounds for such codes. Moreover, we use number theoretic techniques to determine when the equality holds.
}

\section{Introduction} \label{sec:eave}
One of the central problems in coding theory is to determine the maximal number of codewords of length $n$  over $q$ symbols, subject to specific constraints such as a given minimum distance, a specified number of distinct Hamming distances (denoted by $s$, known as the {\em degree} $s$), or a constant weight $w$ for each codeword. A code with a specified degree $s$ is referred to as an {\em $s$-distance set}, and finding the best possible upper bounds for $s$-distance sets is a fundamental challenge in coding theory.

A related problem involves studying distance sets on the real unit sphere $S^{n-1}$. A common approach to establishing upper bounds on the cardinality of distance sets $C$ in various spaces $X$ is to identify a finite-dimensional vector space $V$ and a function $\varphi : X \rightarrow V$ such that the image $\varphi(C)$ forms a linearly independent set in $V$. This leads to the conclusion that $|C|\leq \dim V$. 

For $s$-distance sets $C$ in different spaces, the following upper bounds are known: 
\begin{enumerate}
\item In the Hamming space $H(n, q) := \{1,\ldots,q\}^n$ with Hamming distance $d_H(x,y)=|\{i : x_i\neq y_i, 1\leq i\leq n\}|$, the bound is: 
\[
|C|\leq \sum_{i=0}^s (q-1)^i\binom{n}{i}, 
\]
\item In the Johnson space $J(n, w) := \binom{[n]}{w}=\{x \subset \{1,\ldots,n\} : |x|=w\}$ with distance $d_J(x,y)=w-|x \cap y|$, the bound is: 
\[
|C|\leq \binom{n}{s},  
\]
\item On the unit sphere $S^{d-1}$, the bound is: 
\[
|C|\leq \binom{n-s+1}{s}-\binom{n-s+1}{s-1}. 
\]
\end{enumerate}

An $s$-distance set is called \emph{tight} if it achieves the corresponding upper bound. Examples of tight $s$-distance sets are rare and are often linked to algebraic structures, such as finite simple groups, lattices, or association schemes. The classification of tight $s$-distance sets is a compelling area of research. A well-known result in this context is the Lloyd-type theorem, which states that if a tight $s$-distance set exists in any of the aforementioned spaces, then the distances must correspond to the roots of certain orthogonal polynomials associated with each space. This theorem has been used to prove the non-existence of tight $s$-distance sets in various cases \cite{BannaiDamerell1979,BannaiDamerell1980,Hong1986}. Note that tight $s$-distance sets are tight $2s$-designs in their spaces. 

A notable class of binary codes is the \emph{self-complementary code} \cite{Levenshtein1993}. A binary code $C$ is said to be self-complementary if for every codeword $x \in C$, its complement ${\bm 1}+x$ (where ${\bm 1}$ is the all-ones vector) also belongs to $C$. The problem of $s$-distance sets has also been studied in the context of self-complementary codes, which are essentially the counterparts of $s$-distance sets in \emph{projective spaces} \cite{DelsarteGoethalsSeidel1975}, with the case $s=2$ corresponding to \emph{equiangular lines} \cite{GlazyrinYu2018,GreavesKoolenMunemasaSzoelloesi2016}.   

For a self-complementary code $C$ of length $n$, the set $S(C)$ of Hamming distances between distinct vectors in $C$ satisfies the property that if $a \in S(C) \setminus \{n\}$, then $n-a \in S(C)$. For any self-complementary code $C$, there exists a ``half'' code $C'$ such that $C = C' \cup ({\bm 1}+C')$ and $C' \cap ({\bm 1}+C') = \emptyset$. The set $C'$ satisfies the property that if $a \in S(C')$, then $n-a \in S(C')$.

\subsection{Upper bounds for codes with symmetric distances}

Recently, the first author used a linear algebraic method to prove the following theorem for codes with symmetric distances. (See also \cite[Theorem~3.2]{ArayaHaradaSuda2017} for self-complementary codes. The proof is valid for a half of a self-complementary code.)  
\begin{theorem}[\cite{Hegedues2023}] \label{thm:gh}
Let $n$ be a positive integer, and $C$ be a binary code of length $n$ with degree $s$ and degree set $S(C)=\{d_H(x,y): x,y\in C,x\neq y\}$ satisfying that $n-a\in S(C)$ if $a\in S(C)$.  
Then 
\begin{align} \label{eq:thmgh}
|C|\leq \sum_{\substack{0 \leq i \leq s\\ i\equiv s\pmod{2}}}\binom{n}{i}.
\end{align}
\end{theorem}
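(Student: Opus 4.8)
The plan is to prove this by the linear algebra (polynomial) method, refining the standard Hamming-space bound $|C| \le \sum_{i=0}^{s}\binom{n}{i}$ by exploiting the symmetry of $S(C)$ via a parity argument. First I would pass to the $\pm 1$ model: embed each codeword $x$ as $\sigma(x) \in \{-1,1\}^n$ through $\sigma(x)_i = (-1)^{x_i}$, so that $\langle \sigma(x), \sigma(y)\rangle = n - 2\, d_H(x,y)$. Writing $S(C) = \{a_1, \dots, a_s\}$ and $t_j = n - 2 a_j$, the inner products between distinct codewords take exactly the values $t_1, \dots, t_s$, and the hypothesis that $a \in S(C)$ implies $n - a \in S(C)$ translates precisely into the statement that the set $T = \{t_1, \dots, t_s\}$ is closed under $t \mapsto -t$.

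Next I would introduce the annihilator polynomial $F(t) = \prod_{j=1}^{s}(t - t_j)$ and the functions $f_x : \{-1,1\}^n \to \mathbb{R}$ given by $f_x(u) = F(\langle \sigma(x), u\rangle)$. Since distances between distinct codewords are positive we have $0 \notin S(C)$, hence $t_j \le n - 2 < n$ for all $j$, so $f_x(\sigma(x)) = F(n) \ne 0$, while $f_x(\sigma(y)) = 0$ for every $y \in C \setminus \{x\}$ because $\langle \sigma(x), \sigma(y)\rangle \in T$. Evaluating a linear relation $\sum_x c_x f_x = 0$ at $u = \sigma(z)$ then forces $c_z F(n) = 0$, i.e. $c_z = 0$, so the family $\{f_x\}_{x\in C}$ is linearly independent in the space of real-valued functions on $\{-1,1\}^n$.

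The heart of the argument is to bound the dimension of the ambient space in which the $f_x$ actually live. Because $T$ is closed under negation, $F$ has the same parity as $s$, that is $F(-t) = (-1)^s F(t)$, so $F$ involves only powers $t^k$ with $k \equiv s \pmod 2$ and $k \le s$. Expanding $\langle \sigma(x), u\rangle^k = \big(\sum_i \sigma(x)_i\, u_i\big)^k$ and reducing modulo the relations $u_i^2 = 1$ produces multilinear monomials $\prod_{i \in I} u_i$, where a length-$k$ multiset of indices collapses to the set $I$ of indices of odd multiplicity, and $|I| \equiv k \pmod 2$ with $|I| \le k$. Hence every $f_x$ lies in the span $W$ of the multilinear monomials $\prod_{i\in I} u_i$ with $|I| \le s$ and $|I| \equiv s \pmod 2$, and $\dim W = \sum_{\substack{0 \le i \le s \\ i\equiv s \pmod 2}}\binom{n}{i}$. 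Combining this with the linear independence established above yields the claimed bound.

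I expect the main obstacle to be the parity bookkeeping in this last step: one must verify carefully that both $F$ and each reduced power $\langle \sigma(x), u\rangle^k$ contribute only monomials whose support has size congruent to $s$ modulo $2$. The key identity is that if a length-$k$ multiset reduces to the support $I$ of its odd-multiplicity elements, then $k \equiv |I| \pmod 2$, since $k = \sum_j m_j \equiv \#\{j : m_j \text{ odd}\} = |I| \pmod 2$; once this is in hand the dimension count and the conclusion are routine.
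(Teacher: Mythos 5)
Your proof is correct, and the parity bookkeeping you flag as the main obstacle does go through: closure of $T$ under negation gives $F(-t)=(-1)^sF(t)$, so $F$ involves only powers $t^k$ with $k\equiv s\pmod 2$, and multilinear reduction of $\langle\sigma(x),u\rangle^k$ produces only supports $I$ with $|I|\le k$ and $|I|\equiv k\pmod 2$; since the multilinear monomials are linearly independent as functions on $\{-1,1\}^n$, your dimension count for $W$ is as stated, and the bound follows. However, your route is genuinely different from the paper's. The paper never argues on the cube for this statement: it obtains \cref{thm:gh} as the specialization to the Hamming scheme $H(n,2)$ (where $m_i=\binom{n}{i}$) of \cref{thm:SCbound}, whose proof runs through Delsarte theory in $Q$-bipartite $Q$-polynomial association schemes --- the annihilator polynomial of $C$ is expanded in the dual orthogonal polynomials $v_i^*$, the symmetry of $S(C)$ forces only parity-$s$ terms to survive (\cref{lem:eq}, which is the scheme-theoretic counterpart of your observation $F(-t)=(-1)^sF(t)$), and the bound comes from the rank identity $K\Gamma K^\top=|C|I_{|C|}$ for the characteristic matrices. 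Your approach buys elementarity and self-containedness; it is essentially the Hamming-cube analogue of the paper's \cref{sec:la} proof of \cref{thm:main}, minus the multiplier polynomial $h=\sum_i x_i$ used there to exploit the constant-weight structure. The paper's approach buys generality --- one argument covers $H(n,2)$ and $J(2n,n)$ (and any $Q$-bipartite $Q$-polynomial scheme) simultaneously --- and, crucially, an analysis of the equality case (part (ii) of \cref{thm:SCbound}), which is what powers the paper's later results on tight codes and is not visible from your argument.
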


The first main result of this paper is an improvement of the upper bound in Theorem~\ref{thm:gh} under the assumption that $C$ is a constant weight code with length $2n$ and weight $n$. 
\begin{theorem}\label{thm:main}
Let $n$ be a positive integer, and $C$ be a binary constant weight code of length $2n$ and weight $n$ with degree $s$ and degree set $S(C)=\{d_J(x,y): x,y\in C,x\neq y\}$ satisfying that $n-a\in S(C)$ if $a\in S(C)$. 
Then 
\begin{align}\label{eq:main}
|C| \leq \binom{2n-1}{s}.
\end{align}
\end{theorem}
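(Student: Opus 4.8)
The plan is to combine the standard annihilator-polynomial bound for $s$-distance sets in the Johnson scheme $J(2n,n)$ with the extra rigidity coming from the hypothesis $n-a\in S(C)$. I identify each codeword $x\in C$ with an $n$-subset of $[2n]$ having characteristic vector $\chi_x\in\{0,1\}^{2n}$, and I set $T=\{n-a:a\in S(C)\}$, the set of intersection numbers $|x\cap y|$ occurring among distinct codewords, so that $|T|=s$. For each $x\in C$ define
\[
 F_x(z)=\prod_{t\in T}\frac{\langle\chi_x,\chi_z\rangle-t}{n-t},\qquad z\in\binom{[2n]}{n}.
\]
Since $\langle\chi_x,\chi_z\rangle=|x\cap z|$ equals $n$ when $z=x$ and lies in $T$ when $z\in C\setminus\{x\}$, we get $F_x(y)=\delta_{x,y}$ for $x,y\in C$, so $\{F_x\}_{x\in C}$ is linearly independent. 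As $z\mapsto|x\cap z|=\sum_{i\in x}z_i$ is affine in $z$ and $z_i^2=z_i$, each $F_x$ lies in the space $\cP_s:=V_0\oplus\cdots\oplus V_s$ of functions on $\binom{[2n]}{n}$ of degree at most $s$, where $V_0,\dots,V_n$ are the eigenspaces of $J(2n,n)$ with $\dim V_j=\binom{2n}{j}-\binom{2n}{j-1}$; classically $\dim\cP_s=\binom{2n}{s}$ for $s\le n$. This already yields the Johnson bound $|C|\le\binom{2n}{s}$, and the whole point is to improve $\binom{2n}{s}$ to $\binom{2n-1}{s}$.

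Next I would extract the symmetry. Let $\sigma$ be the complementation involution $z\mapsto[2n]\setminus z$, an automorphism of $J(2n,n)$, and write $g(m)=\prod_{t\in T}(m-t)/(n-t)$ so that $F_x(z)=g(|x\cap z|)$. Using $|x\cap z^c|=n-|x\cap z|$ and the hypothesis that $S(C)$, equivalently $T$, is symmetric about $n/2$ (that is, $T=n-T$), a reindexing $t\mapsto n-t$ in the product gives
\[
 g(n-m)=\prod_{t\in T}\frac{n-m-t}{n-t}=\prod_{t\in T}\frac{t-m}{t}=(-1)^{s}\,\frac{\prod_{t\in T}(m-t)}{\prod_{t\in T}t}=(-1)^{s}g(m),
\]
the last equality using $\prod_{t\in T}t=\prod_{t\in T}(n-t)$. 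Hence $F_x\circ\sigma=(-1)^sF_x$: every $F_x$ is a $(-1)^s$-eigenvector of $\sigma$ on $\cP_s$. This is the key new step, and it is short; the substance is that the combinatorial symmetry of the distance set becomes an eigenvalue condition for the complementation map.

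It then remains to bound the dimension of that eigenspace, and here I would use that $J(2n,n)$ is Q-bipartite with respect to $\sigma$: complementation acts on $V_j$ as the scalar $(-1)^j$. One checks this on $V_1$ directly (a difference $z_i-z_{i'}$ is negated by $\sigma$) and propagates it through the Q-polynomial structure, or quotes it from the association-scheme framework of the paper. Consequently the $(-1)^s$-eigenspace of $\sigma$ on $\cP_s$ is exactly $\bigoplus_{0\le j\le s,\,j\equiv s\,(2)}V_j$, whose dimension is
\[
 \sum_{\substack{0\le j\le s\\ j\equiv s\,(2)}}\Bigl(\tbinom{2n}{j}-\tbinom{2n}{j-1}\Bigr)=\sum_{j=0}^{s}(-1)^{s-j}\binom{2n}{j}=\binom{2n-1}{s},
\]
the final equality being the alternating partial-sum identity $\sum_{j=0}^{s}(-1)^{j}\binom{m}{j}=(-1)^s\binom{m-1}{s}$ with $m=2n$. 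Since the linearly independent family $\{F_x\}_{x\in C}$ lives in this eigenspace, $|C|\le\binom{2n-1}{s}$, as claimed.

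I expect the main obstacle to be the justification that $\sigma$ acts as $(-1)^j$ on $V_j$ (the Q-bipartite property), together with the correct dimension bookkeeping in the last display, rather than the eigenvector computation itself, which becomes immediate once the symmetry $T=n-T$ is unwound. This is also the step that the paper's general framework for Q-bipartite Q-polynomial schemes is presumably designed to supply, so I would set up the argument so that steps one and two are self-contained and only the eigenspace scalar is imported.
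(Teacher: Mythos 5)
Your proof is correct, and it takes a genuinely different route from the paper's own direct proof of \cref{thm:main}. The paper (\cref{sec:la}) works with signed characteristic vectors in $\{-1,1\}^{2n}$: it places the interpolating polynomials $\overline{P}_i$ in the parity-restricted multilinear space $S(2n,s)$, and then, rather than identifying a smaller subspace containing them, it \emph{enlarges} the independent family by the auxiliary polynomials $h\cdot x^\alpha$, $\alpha\in\cR(2n,s-1)$, which vanish on all weight-$n$ vectors but remain independent off them; this yields $|C|\leq \dim S(2n,s)-\dim S(2n,s-1)=\binom{2n-1}{s}$. You instead stay on the Johnson-scheme side and realize the parity restriction geometrically: the symmetry $T=n-T$ makes each interpolator $F_x$ a $(-1)^s$-eigenvector of complementation $\sigma$, so the $F_x$ live in $\bigoplus_{0\le j\le s,\ j\equiv s\ (2)}V_j$, whose dimension telescopes to $\binom{2n-1}{s}$. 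This is closest in spirit to the paper's \cref{thm:SCbound} (\cref{sec:d}), where the same parity restriction comes from the dual-eigenvalue symmetry $\theta^*_{n-i}=-\theta^*_i$ (Lemma 3.2) and the bound from Delsarte's characteristic-matrix identity $K\Gamma K^\top=|C|\,I$; your eigenspace argument replaces that machinery with a single automorphism computation, which is more transparent, while the paper's \cref{sec:la} proof buys self-containedness (nothing beyond independence of monomials on the cube is needed).

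Two imported facts in your argument should be pinned down, though both are standard and true. First, $\cP_s=V_0\oplus\cdots\oplus V_s$ with $\dim\cP_s=\binom{2n}{s}$ for $s\le n$ is classical Delsarte theory for $J(2n,n)$. Second, for the scalar action of $\sigma$ on $V_j$, the cleanest justification is to note that the permutation matrix of $\sigma$ is exactly $A_n$ (complementation is the distance-$n$ relation), so it lies in the Bose--Mesner algebra and acts on each $V_j$ as its eigenvalue there, which the Eberlein polynomial evaluation gives as $(-1)^j$; your suggested induction through the Q-polynomial structure also works, precisely because scalar action is automatic once one knows $P_\sigma\in\cA$, and the Krein number $q_{1,j-1}^j\neq 0$ propagates the sign. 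Finally, one small point you use implicitly: the hypothesis forces $n\notin S(C)$ (otherwise $0\in S(C)$, impossible), hence $0\notin T$, which is what guarantees that the quantity $\prod_{t\in T}t$ appearing in your symmetry computation is nonzero.
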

Note that for the codewords $x,y$ with the same weight, $d_H(x,y)=2d_J(x,y)$ holds. In \cref{sec:la}, we will give a direct proof of \cref{thm:main} using a linear algebraic method.

The constant weight codes with length $2 n$ and weight $n$ are just subsets of the Johnson association scheme $J(2 n, n)$. In the framework of the association schemes, \cref{thm:main} could be generalized. Moreover, this approach of the association schemes provides not only upper bounds on the corresponding codes, but also a necessarily condition when equality occurs. See \cref{sec:d} for undefined terms on association schemes.  
\begin{theorem}\label{thm:SCbound}
Let $(X,\{R_i\}_{i=0}^n)$ be a $Q$-bipartite $Q$-polynomial association scheme, and let $C$ be a symmetric inner distribution code with degree $s$. Then:
\begin{enumerate}
\item The following holds. 
\begin{align}\label{eq:qq}
|C|\leq \sum_{\substack{0 \leq i \leq s\\ i\equiv s\pmod{2}}}m_i.
\end{align}
\item Let
\[
  \Psi^\ast_s(x) := \sum_{\substack{0 \leq i \leq s \\ i\equiv s\pmod{2}}} v_i^*(x).
\]
If equality holds in \eqref{eq:qq}, then the zeros of $\Psi_s^*(x)$ are distinct dual eigenvalues $\theta_i^*$, $i \in S$ for some $s$-subset $S \subseteq \{1,\ldots,n-1\}$.
\end{enumerate}
\end{theorem}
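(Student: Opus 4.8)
The plan is to adapt the classical annihilator-polynomial (``absolute bound'') argument to the $Q$-polynomial setting and then to extract the parity refinement from the $Q$-bipartite structure. Write $V=\R^X$, let $E_0,\dots,E_n$ be the primitive idempotents with $V_i=\operatorname{im}E_i$ and $m_i=\operatorname{rank}E_i$, and recall the standard identities $\sum_j v_i^*(\theta_j^*)A_j=|X|E_i$, $v_i^*(\theta_0^*)=m_i$, and $(E_i)_{xx}=m_i/|X|$. For the degree set $S=S(C)\subseteq\{1,\dots,n-1\}$ (with $|S|=s$) I would introduce the annihilator
\[
  F(u)=\prod_{a\in S}\frac{u-\theta_a^*}{\theta_0^*-\theta_a^*},\qquad f_x(z):=F(\theta_j^*)\ \text{ where }(z,x)\in R_j .
\]
Then $F(\theta_0^*)=1$ and $F(\theta_a^*)=0$ for $a\in S$, so $f_x(x)=1$ and $f_x(y)=0$ for distinct $x,y\in C$; hence $(f_x(y))_{x,y\in C}$ is the identity matrix and $\{f_x:x\in C\}$ is linearly independent. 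Expanding $F=\sum_{i=0}^s\beta_i v_i^*$ in the basis of dual polynomials and applying the first identity gives $f_x=F(A)e_x=|X|\sum_{i=0}^s\beta_i E_i e_x$, so each $f_x$ lies in $\bigoplus_{i=0}^s V_i$.

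Now I would bring in the two hypotheses. Because $C$ has symmetric distances, $n-S=S$, and because the scheme is $Q$-bipartite the dual eigenvalues satisfy $\theta_{n-a}^*=-\theta_a^*$ while the dual polynomials have parity $v_i^*(-u)=(-1)^i v_i^*(u)$. Hence the zero set $\{\theta_a^*:a\in S\}$ of $F$ is symmetric about $0$, so $F(-u)=(-1)^s F(u)$; comparing parities in $F=\sum_i\beta_i v_i^*$ forces $\beta_i=0$ unless $i\equiv s\pmod 2$. Therefore $f_x\in U:=\bigoplus_{i\in T}V_i$ with $T:=\{0\le i\le s:i\equiv s\ (\mathrm{mod}\ 2)\}$, and linear independence yields $|C|\le\dim U=\sum_{i\in T}m_i$, which is \eqref{eq:qq}. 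The only genuinely delicate point here is the parity bookkeeping, i.e.\ checking that the $Q$-bipartite hypothesis supplies exactly $\theta_{n-a}^*=-\theta_a^*$ together with the parity of the $v_i^*$.

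For part (ii), suppose equality holds. Then $\{f_x:x\in C\}$, being $\dim U$ linearly independent vectors inside $U$, is a basis of $U$. Writing $P_U=\sum_{i\in T}E_i$ for the orthogonal projection onto $U$, I would observe that $\langle f_x,P_U e_y\rangle=f_x(y)=\delta_{xy}$ for $x,y\in C$, so $\{P_U e_x:x\in C\}$ is the dual basis and $P_U=\sum_{x\in C}f_x(P_U e_x)^{\top}$ as operators. The plan is then to recover the coefficients $\beta_i$ by a trace computation: for each $i\in T$, using $E_iP_U=E_i$ and $E_if_x=|X|\beta_i E_ie_x$,
\[
  m_i=\operatorname{tr}(E_i)=|X|\beta_i\sum_{x\in C}\langle P_U e_x,E_i e_x\rangle=|X|\beta_i\cdot|C|\cdot\frac{m_i}{|X|}=\beta_i\,|C|\,m_i ,
\]
so $\beta_i=1/|C|$ for every $i\in T$. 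Consequently $F=\tfrac{1}{|C|}\sum_{i\in T}v_i^*=\tfrac{1}{|C|}\Psi_s^*$, i.e.\ $F$ and $\Psi_s^*$ are proportional. Since $F$ has degree $s$ with the $s$ distinct zeros $\{\theta_a^*:a\in S\}$, the same holds for $\Psi_s^*$, which proves the claim with the $s$-subset being $S$ itself.

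I expect the conceptual crux to be the trace identity in part (ii): it is the step that upgrades the bare dimension inequality into the exact proportionality $F\propto\Psi_s^*$, and it hinges on recognizing that equality makes $\{f_x\}$ and $\{P_U e_x\}$ mutually dual bases of $U$. The remaining difficulties are organizational rather than conceptual: fixing a consistent normalization of the dual eigenmatrix so that the identities quoted above hold verbatim, and confirming that the $\theta_a^*$ for $a\in\{1,\dots,n-1\}$ are pairwise distinct (so that ``$s$ distinct zeros'' is legitimate and the annihilator $F$ genuinely has degree $s$). With those conventions settled, both the parity reduction in (i) and the trace identity in (ii) are short.
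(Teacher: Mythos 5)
Your proof is correct, and it takes a genuinely more self-contained route than the paper's. The common core is the parity reduction: both arguments rest on the fact (Lemma~\ref{lem:eq} in the paper) that for a symmetric degree set in a $Q$-bipartite scheme the annihilator polynomial $F$ expands only in the $v_i^*$ with $i\equiv s\pmod 2$; you prove this inline from $F(-u)=(-1)^sF(u)$ and the parity of the $v_i^*$, while the paper factors $F$ explicitly into even/odd form --- the same computation. After that the routes diverge. For part (i), the paper feeds the expansion into Delsarte's machinery: it forms the characteristic matrices $G_i$, sets $K$ and $\Gamma=\bigoplus_i f_i I_{m_i}$, invokes \cite[Theorem~3.13]{Delsarte1973} to get $K\Gamma K^\top=|C|I_{|C|}$, and takes ranks; you instead argue directly that the functions $f_x=|X|\sum_i\beta_i E_i e_x$ restrict to delta functions on $C$ and lie in $U=\bigoplus_{i\equiv s\pmod 2}\operatorname{im}E_i$, so $|C|\le\dim U$. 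For part (ii), the paper cites \cite[Lemma~3.6]{Delsarte1973} to get $f_i\le 1$ and then evaluates $F$ at $\theta_0^*$ to force $f_i=1$; your replacement --- equality makes $\{f_x\}_{x\in C}$ a basis of $U$, $\{P_U e_x\}_{x\in C}$ is its dual basis, and the trace identity $m_i=\operatorname{tr}(E_iP_U)=\beta_i|C|m_i$ recovers $\beta_i=1/|C|$ --- is new relative to the paper and checks out, using only $E_iE_j=\delta_{ij}E_i$, $(E_i)_{xx}=m_i/|X|$, and $m_i>0$. Both proofs conclude that $F$ is proportional to $\Psi_s^*$ and take the $s$-subset to be $S(C)$ itself. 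The trade-off: the paper's proof is shorter because the two Delsarte citations carry the load; yours is elementary and self-contained at the cost of the dual-basis bookkeeping. One point that both you and the paper leave implicit and that is worth stating: the degree set of a symmetric inner distribution code cannot contain $n$ (otherwise $n-n=0$ would have to lie in $S(C)$, impossible by definition), and this is exactly what guarantees $S(C)\subseteq\{1,\dots,n-1\}$, that $\theta_0^*$ is not among the zeros of $F$, and hence that your normalization $F(\theta_0^*)=1$ is legitimate.
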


\cref{thm:SCbound} will be proved at the end of \cref{sec:d}. Note that both the binary Hamming scheme $H(n, 2)$ and the Johnson association scheme $J(2 n, n)$ are Q-bipartite Q-polynomial association scheme, hence \cref{thm:SCbound} could be applied to these schemes. For the binary Hamming scheme $H(n, 2)$, $m_i = {n \choose i}$, thus \cref{thm:gh} follows from \cref{thm:SCbound} immediately. For Johnson association scheme $J(2 n, n)$, $m_i = {2 n \choose i} - {2 n \choose i - 1}$, thus \cref{thm:main} follows from \cref{thm:SCbound} immediately.

\subsection{Tight symmetric inner distribution codes}

Our second main result is on tight symmetric inner distribution codes for Johnson schemes, namely the codes for which the equality holds in \cref{thm:main}. With the help of the orthogonal polynomials for the association scheme $J(2n, n)$, we first give a strong necessary condition on when the equality holds in \cref{thm:main}.
\begin{theorem} \label{thm:jointed} Let $r, s$ be positive integers, and let $n := r + s$. Let
\begin{equation} \label{eq:pleiotaxis}
  \Phi_s(z) := \sum_{i = 0}^s (-1)^{s - i} z^{\underline{s - i}} p_{s, i} \in \QQ[z],
\end{equation}
where
\[
  p_{s, i} := \frac{{s \choose i}}{2^i} \frac{ (r + i - 1)^{\underline{\lfloor i / 2 \rfloor}} (r + 1)^{\overline{i}} }{ (r + 1 / 2)^{\overline{\lfloor i / 2 \rfloor}} }.
\]
Assume that there exist some tight symmetric inner distribution codes for the Johnson scheme $J(2 n, n)$ of degree $s$. Then, the polynomial $\Phi_s(z)$ has distinct integral zeros. Moreover, the zeros are in the interval $[1, r + s - 1]$.
\end{theorem}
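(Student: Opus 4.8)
The plan is to derive \cref{thm:jointed} from \cref{thm:SCbound} specialized to the Johnson scheme $J(2n,n)$, the substantive part being an explicit identification of $\Psi_s^*$ with a scalar multiple of $\Phi_s$. As noted after \cref{thm:SCbound}, the scheme $J(2n,n)$ is $Q$-bipartite $Q$-polynomial, and a tight symmetric inner distribution code of degree $s$ is exactly a code attaining equality in the bound \eqref{eq:qq}. Thus \cref{thm:SCbound} applies, and its second assertion shows that the zeros of $\Psi_s^*$ are distinct dual eigenvalues $\theta_i^*$ with $i$ ranging over some $s$-subset $S\subseteq\{1,\dots,n-1\}$. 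The whole theorem then reduces to transporting this statement, which lives in the variable $x=\theta_i^*$, into the variable $z=i$ in which $\Phi_s$ is written.

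To do this I first record the closed forms, for $J(2n,n)$, of the dual eigenvalues $\theta_i^*$ and of the dual polynomials $v_i^*$; the latter are, up to normalization, the Hahn polynomials attached to the scheme, and the half-integer parameter $r+1/2$ together with the factor $2^i$ in $p_{s,i}$ are the fingerprints of these polynomials. Here the $Q$-bipartite hypothesis is decisive: it forces $\theta_i^*$ to be an \emph{affine} function of the index $i$ that is anti-symmetric under $i\mapsto n-i$, and it gives $v_i^*(-x)=(-1)^i v_i^*(x)$. Writing $\theta_z^*$ for the affine extension of $i\mapsto\theta_i^*$ to a continuous variable $z$, I substitute $x=\theta_z^*$ into $\Psi_s^*$ and aim to verify the identity
\[
  \Phi_s(z) = c\,\Psi_s^*(\theta_z^*)
\]
for a nonzero constant $c$. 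Concretely this means expanding $\Psi_s^*(\theta_z^*)$ in the falling-factorial basis $z^{\underline{s-i}}$ and checking that the coefficient of $z^{\underline{s-i}}$ equals $(-1)^{s-i}p_{s,i}$. The parity restriction $i\equiv s\pmod 2$ built into $\Psi_s^*$ interacts with the symmetry $i\mapsto n-i$ to fold pairs of terms, which is exactly what produces the $\lfloor i/2\rfloor$ exponents in $p_{s,i}$; the resulting symmetry $\Phi_s(n-z)=(-1)^s\Phi_s(z)$ serves as a useful consistency check.

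Granting this identity, the conclusion is formal. The substitution $x=\theta_z^*$ is an affine bijection carrying the dual eigenvalue $\theta_i^*$ to the index $i$, so the zeros $\{\theta_i^*:i\in S\}$ of $\Psi_s^*$ correspond precisely to zeros of $\Phi_s$ at the integers $z=i$, $i\in S$. Since $|S|=s$ and $\Phi_s$ has degree exactly $s$ (its leading coefficient is $(-1)^sp_{s,0}=(-1)^s\neq 0$), these are all of its zeros; and because $S\subseteq\{1,\dots,n-1\}=\{1,\dots,r+s-1\}$, they are distinct integers lying in the interval $[1,r+s-1]$, as claimed.

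The main obstacle is the computation behind the identity $\Phi_s(z)=c\,\Psi_s^*(\theta_z^*)$: one must produce workable closed forms for the dual eigenvalues and for the dual polynomials $v_i^*$ of $J(2n,n)$, and then carry out the factorial bookkeeping — in particular the folding forced by $Q$-bipartiteness — to match coefficients with the prescribed $p_{s,i}$. Once this identification is secured, invoking \cref{thm:SCbound} and reading off both the integrality and the range of the zeros is immediate.
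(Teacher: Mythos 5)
Your proposal is correct and takes essentially the same route as the paper: apply \cref{thm:SCbound} to $J(2n,n)$ and transport its conclusion through an identity $\Psi_s^*(\theta_z) = c\,\Phi_s(z)$, which is exactly the paper's \cref{prop:amphivorous} (the paper verifies that identity by showing both sides satisfy the three-term recursion \cref{eq:slee} under the substitution $x=\theta_z$, with uniqueness from the initial conditions, rather than by summing closed forms of the $v_i^*$ — an implementation detail, and in both cases the heavy coefficient bookkeeping in the falling-factorial basis is the deferred computation). One small correction: $Q$-bipartiteness alone does not force $\theta_i^*$ to be an affine function of $i$ (it only gives the antisymmetry $\theta_{n-i}^*=-\theta_i^*$), so you must instead invoke the standard explicit formula $\theta_i^* = (2n-1)(1-2i/n)$ for $J(2n,n)$, which is precisely the affine substitution the paper uses.
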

The proof of \cref{thm:jointed} will be given in \cref{sec:cognizant}.

From computer experiments, we found that the zeros of $\Phi_s(z)$ are rarely all integers, unless when $s = 1$ or when $r = 1$. We thus conjecture that all tight codes arise from these two cases.

\begin{conjecture} \label{conj:madness}
Assume that there exist some tight symmetric inner distribution codes for the Johnson scheme $J(2 n, n)$ of degree $s$. Then, either $s = 1$ or $s = n - 1$.
\end{conjecture}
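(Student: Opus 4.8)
The plan is to reduce \cref{conj:madness} to a purely arithmetic statement about the numbers $p_{s,i}$, and then to attack that statement one prime at a time. By \cref{thm:jointed}, the existence of a tight code of degree $s$ forces $\Phi_s(z)$ to have $s$ distinct integer zeros in $[1, n-1]$. Since the top-degree term of $\Phi_s$ arises only from the index $i = 0$ and $p_{s,0} = 1$, the polynomial $(-1)^s \Phi_s$ is monic of degree $s$; a monic polynomial in $\QQ[z]$ all of whose roots are integers necessarily lies in $\ZZ[z]$. Because the change of basis between $\{z^{\underline{k}}\}$ and $\{z^k\}$ is unitriangular with integer (Stirling) entries in both directions, the condition $\Phi_s \in \ZZ[z]$ is equivalent to $p_{s,i} \in \ZZ$ for all $0 \le i \le s$. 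Recalling $n = r + s$, the asserted dichotomy $s \in \{1, n-1\}$ is exactly $\min(r,s) = 1$. Thus it suffices to prove the following: for all integers $r, s \ge 2$, at least one of $p_{s,0}, \dots, p_{s,s}$ is not an integer.

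To set up the arithmetic I would first rewrite the half-integer Pochhammer symbol as $(r + 1/2)^{\overline{\lfloor i/2 \rfloor}} = 2^{-\lfloor i/2 \rfloor}\prod_{j=1}^{\lfloor i/2 \rfloor}(2r + 2j - 1)$, so that
\[
  p_{s,i} = \frac{\binom{s}{i}}{2^{\lceil i/2 \rceil}} \cdot \frac{(r + i - 1)^{\underline{\lfloor i/2 \rfloor}}\,(r+1)^{\overline{i}}}{\prod_{j=1}^{\lfloor i/2 \rfloor}(2r + 2j - 1)}.
\]
Two small indices already cover large ranges. The index $i = 1$ gives $p_{s,1} = s(r+1)/2$, which fails to be an integer whenever $s$ is odd and $r$ is even; equivalently the subleading coefficient of $\Phi_s$, which must equal minus the sum of the integer zeros, is $\pm\, sn/2$ and is non-integral exactly when $s$ and $n$ are both odd. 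The index $i = 2$ gives $p_{s,2} = \binom{s}{2}(r+1)^2(r+2)/\bigl(2(2r+1)\bigr)$, and a short congruence computation modulo $2r+1$ (using $r+1 \equiv 2^{-1}$) shows $2r+1 \nmid (r+1)^2(r+2)$ for $r \ge 2$, so that the odd part $2r+1$ survives and $p_{s,2} \notin \ZZ$. Iterating this idea, for each odd prime $q$ dividing a denominator factor $2r + 2j - 1$ one computes $v_q(p_{s,i})$ and looks for an index $i$ making it negative; since the largest denominator factor $2r + 2\lfloor i/2 \rfloor - 1$ exceeds every numerator factor $\le r + s$ once $r \ge 2$, any prime $q \mid 2r + 2j - 1$ with $q > r + s$ immediately produces $v_q(p_{s,i}) < 0$.

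The main obstacle is that this local argument must be made to succeed \emph{simultaneously} over all pairs $(r,s)$ with $r, s \ge 2$, and the index $i$ witnessing non-integrality genuinely depends on $(r,s)$. For example, at $(r,s) = (7,3)$ both $p_{3,1}$ and $p_{3,3}$ are integers, and one is forced back on $p_{3,2} = 1728/30 \notin \ZZ$. The difficulty is that an odd denominator factor $2r + 2j - 1$ may factor entirely into primes $\le r + s$, and these small primes can then be absorbed by the rising and falling factorials $(r+i-1)^{\underline{\lfloor i/2 \rfloor}}(r+1)^{\overline{i}}$ together with the binomial $\binom{s}{i}$ in the numerator. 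Ruling out the scenario in which such cancellation happens for \emph{every} index $i$ at once is precisely the delicate step, and is the reason the statement is at present only a conjecture.

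I expect a complete proof to combine three ingredients: the parity obstruction coming from $p_{s,1}$; a $2$-adic Newton-polygon analysis of $\Phi_s$ that controls $v_2(p_{s,i})$ for all $i$ together; and a Sylvester--Erd\H{o}s type prime-divisor input guaranteeing that the product $\prod_{j}(2r + 2j - 1)$ always contains a prime power which cannot be matched in the numerator for at least one index $i$. The principal difficulty, and the point I would expect to consume most of the work, is this last global non-cancellation statement, which must hold uniformly across the full two-parameter family $(r,s)$ with $r, s \ge 2$.
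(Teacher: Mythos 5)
You should note first that the statement you were asked to prove is \cref{conj:madness}, which the paper itself does not prove: the paper only establishes partial results, namely \cref{thm:psychophysiology} (if $\Phi_s$ has only integer zeros then $s=1$, $r=1$, or $s \geq 5000\, r\,(14.5+\ln r)^2$) and \cref{cor:abrachia} (verification for $n \leq 4{,}000{,}000$). Your opening reduction is correct and is exactly the (implicit) reduction the paper uses to launch its partial result: by \cref{thm:jointed} a tight code forces $\Phi_s$ to have $s$ distinct integer zeros, $(-1)^s\Phi_s$ is monic since $p_{s,0}=1$, hence $\Phi_s \in \ZZ[z]$, and integrality in the falling-factorial basis is equivalent to $p_{s,i} \in \ZZ$ for all $i$; the conjecture then reduces to showing some $p_{s,i}$ is non-integral whenever $r,s \geq 2$. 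Your honest assessment that the uniform non-cancellation step is open matches the status of the statement: what you propose is a research program, not a proof, and neither you nor the paper closes the gap.

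Beyond that, one concrete step in your sketch is false. You claim that since $2r+1 \nmid (r+1)^2(r+2)$ for $r \geq 2$, the factor $2r+1$ ``survives'' and $p_{s,2} \notin \ZZ$; this ignores the binomial $\binom{s}{2}$ in the numerator, which can absorb the denominator. For $(r,s)=(2,5)$ one has $p_{5,2} = \binom{5}{2}\cdot\frac{(r+1)^2(r+2)}{2(2r+1)} = 10\cdot\frac{9\cdot 4}{10} = 36 \in \ZZ$, so the $i=2$ coefficient alone cannot settle even a cofinite set of cases. (You do acknowledge this absorption phenomenon two sentences later, but the two statements are inconsistent.) The paper is more careful on precisely this point: from $p_{s,2}\in\ZZ$ it extracts only the bound $r < \frac{3}{4}s^2$ (\cref{prop:unensured}), and its main leverage comes instead from $p_{s,s}$, analyzed via the resultant of the numerator and denominator polynomials, the $(2s-1)$-smoothness of the half-integer binomial $\binom{r+\lfloor s/2\rfloor - 1/2}{\lfloor s/2 \rfloor}$ controlled by a Kummer-type valuation bound (\cref{prop:zinziberaceous}), and prime-gap estimates through $\rho_s$ (\cref{prop:dominated}), supplemented by finite computer searches. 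If you want to push your program forward, your third ingredient (a Sylvester--Erd\H{o}s type prime-divisor input) is essentially the prime-gap mechanism the paper already exploits; the genuinely missing piece, there as in your sketch, is a uniform argument covering the regime $s$ very large relative to $r$, where all denominator primes are small and absorption can in principle occur at every index simultaneously.
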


With the number theoretic techniques, we are able to give bounds on $s$ and $r$. The proof of \cref{thm:psychophysiology} will be given in \cref{sec:Ranzania}.
\begin{theorem} \label{thm:psychophysiology}
Let $r, s$ be positive integers. If $\Phi_s$ has only integer zeros, then either $s = 1$, or $r = 1$, or $s \geq 5000 r (14.5 + \ln r)^2$.
\end{theorem}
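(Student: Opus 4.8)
The plan is to argue by contradiction through the integrality of a single coefficient of $\Phi_s$, converting the obstruction into the problem of locating a prime in a short interval, which is then settled by an explicit form of the prime number theorem. Assume throughout that $r \geq 2$ and $2 \leq s < 5000 r (14.5 + \ln r)^2$; I want to show that $\Phi_s$ has a non-integer zero. Since the only term of \eqref{eq:pleiotaxis} of degree $s$ is the $i = 0$ term and $p_{s, 0} = 1$, the polynomial $\Phi_s$ has leading coefficient $(-1)^s$. Hence, if all zeros of $\Phi_s$ were integers, then $\Phi_s(z) = (-1)^s \prod_{j} (z - a_j)$ with $a_j \in \Z$, and in particular $\Phi_s(m) \in \Z$ for every $m \in \Z$. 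I will obtain a contradiction by showing that $\Phi_s(0) \notin \Z$.

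Evaluating \eqref{eq:pleiotaxis} at $z = 0$ kills every falling factorial except the $i = s$ term, so $\Phi_s(0) = p_{s, s}$. Rewriting the half-integer Pochhammer symbol as $(r + 1/2)^{\overline{k}} = 2^{-k} \prod_{j = 1}^{k} (2 r + 2 j - 1)$ with $k = \lfloor s / 2 \rfloor$ gives
\[
  p_{s, s} = \frac{ (r + 1)^{\overline{s}} \, (r + s - 1)^{\underline{\lfloor s / 2 \rfloor}} }{ 2^{\lceil s / 2 \rceil} \prod_{j = 1}^{\lfloor s / 2 \rfloor} (2 r + 2 j - 1) }.
\]
The crucial structural observation is that every factor of the numerator lies in $[r + 1, r + s]$, whereas the odd part of the denominator is a product of terms lying in $[2 r + 1, 2 r + s - 1]$. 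Consequently, if some odd prime $p$ divides $\prod_{j = 1}^{\lfloor s / 2 \rfloor} (2 r + 2 j - 1)$ and satisfies $p > r + s$, then $v_p$ of the numerator is $0$ while $v_p$ of the denominator is positive, so $p_{s, s} \notin \Z$, the desired contradiction. Since every odd number in $[2 r + 1, 2 r + 2 \lfloor s / 2 \rfloor - 1]$ is of the form $2 r + 2 j - 1$, it suffices to produce a prime in the interval $(r + s, \, 2 r + 2 \lfloor s / 2 \rfloor - 1]$.

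First I would treat the principal regime $r < s$, which is the one that reaches up to the stated threshold. Here the target interval has the form $(r + s, \, 2 r + s - 1]$ (with an evident variant when $s$ is odd), of length about $r$ and centered near $x := r + s$. An explicit prime-gap estimate of Dusart type, guaranteeing a prime in $(x, \, x(1 + 1 / (25 \ln^2 x))]$ for $x$ beyond an absolute bound, supplies such a prime as soon as the interval length exceeds $x / (25 \ln^2 x)$, i.e.\ as soon as $r + s \lesssim 25 r \ln^2(r + s)$. Since $\ln(r + s) = \ln r + O(1)$ throughout the range $s < 5000 r (14.5 + \ln r)^2$, this inequality does hold, producing the obstructing prime and hence the contradiction. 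The closed form $5000 r (14.5 + \ln r)^2$ is exactly the safe calibration of this computation: the summand $14.5$ absorbs the passage from $\ln(r + s)$ to $\ln r$ together with the validity range of Dusart's theorem, and the constant $5000$ leaves room for the explicit constants. The complementary range $2 \leq s \leq r$ lies far below the threshold and must still be handled; there the interval $(2 r, \, 2 r + s - 1]$ may be too short to be guaranteed prime-containing, so I would instead exploit finer divisibility in $p_{s, s}$ — either a prime lying in this short interval when one is present, or, as happens already for $s = 2$, a prime-power mismatch at a small prime $p \leq r + s$ whose valuation in $\prod_j (2 r + 2 j - 1)$ exceeds its valuation in the short numerator $(r + 1)^{\overline{s}} (r + s - 1)^{\underline{\lfloor s / 2 \rfloor}}$. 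The finitely many smallest values of $r$, where Dusart's bound does not yet apply, would be checked directly.

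I expect the main obstacle to be the analytic number theory with fully explicit constants: converting the prime-gap statement into the precise inequality yielding $5000 r (14.5 + \ln r)^2$, while correctly accounting for the validity range of the explicit prime-counting estimate and the replacement of $\ln(r + s)$ by $\ln r$. A secondary difficulty is the small-$s$ regime $2 \leq s \leq r$, where no large-prime obstruction need exist and one must argue instead through small-prime valuations in $p_{s, s}$ (and, if necessary, through the integrality of the adjacent coefficient coming from $p_{s, s - 1}$); organizing this dichotomy so that the two regimes together cover all $(r, s)$ below the threshold is the part most likely to require careful bookkeeping.
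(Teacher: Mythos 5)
Your opening reduction is sound and coincides with the paper's main case: since $p_{s,0}=1$, the polynomial $\Phi_s$ has leading coefficient $(-1)^s$, so integer zeros force $\Phi_s(0)=p_{s,s}\in\mathbb{Z}$, and an odd prime $p>r+s$ dividing one of $2r+1,2r+3,\ldots,2r+2\lfloor s/2\rfloor-1$ gives ${\rm val}_p(p_{s,s})<0$, a contradiction; the paper's Case~2 is exactly this obstruction, phrased via $\rho_r$ (the first starting point of a prime-free interval of length $r-1$). However, your quantitative calibration in the regime $r<s$ is wrong. With the Dusart estimate you cite (a prime in $(x,\,x(1+\tfrac{1}{25\ln^2 x})]$), you need roughly $r+s\leq 25(r-1)\ln^2(r+s)$; at the top of the range, $r+s\approx 5000r(14.5+\ln r)^2$ while $25r\ln^2(r+s)\approx 25r\bigl(\ln r+8.5+2\ln(14.5+\ln r)\bigr)^2$, so the required inequality fails by a factor tending to $200$ as $r\to\infty$. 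Your assertion that ``$\ln(r+s)=\ln r+O(1)$ \ldots\ this inequality does hold'' is false, and the $25$-constant estimate only reaches a threshold of order $25r\ln^2 r$. The constant $5000$ in the theorem is not slack ``for the explicit constants'': it is inherited from Dusart's stronger estimate (a prime in $(x,\,x(1+\tfrac{1}{5000\ln^2 x})]$ for $x\geq 468991632$), which underlies \cref{prop:dominated} ($\rho_r\geq 5000r(14.6+\ln r)^2$ for $r\geq 288$). With that estimate your computation does close for $r\geq 288$, and $2\leq r\leq 287$ is left to a large computer search (the paper's Case~2.1), far beyond a hand check of ``finitely many smallest values of $r$.''

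The more serious gap is the regime $2\leq s\leq r$, an infinite family lying entirely below the threshold, for which your proposal contains no argument. There the large-prime obstruction can genuinely be absent: since every factor $2r+2j-1$ is less than $2(r+s)$, a prime factor exceeding $r+s$ must be the number $2r+2j-1$ itself, so your mechanism requires one of the $\lfloor s/2\rfloor$ odd numbers $2r+1,\ldots,2r+2\lfloor s/2\rfloor-1$ to be prime, which fails for infinitely many $(r,s)$ (already $s=2,3$ need $2r+1$ prime). Your fallback, a ``prime-power mismatch at a small prime,'' is verified only for $s=2$ and is precisely the hard part. The paper's treatment of this regime is its main technical contribution: writing $p_{s,s}$ as the specialization at $x=r$ of a ratio $f/g$ of polynomials in $\mathbb{Z}[x]$, it uses the resultant ${\rm res}_x(f,g)$ (all of whose prime factors are at most $2s-1$) to show that integrality of $p_{s,s}$ forces the half-integer binomial coefficient ${r+\lfloor s/2\rfloor-1/2 \choose \lfloor s/2\rfloor}$ to be $(2s-1)$-smooth; then \cref{prop:zinziberaceous} --- which itself needs a Kummer-type valuation bound (\cref{lem:trimellitic}), Stirling estimates, explicit prime counting, and once more the prime-gap bound --- shows this smoothness forces $s\leq 285$ or $r\leq s-1$; finally the residual strip $s\leq 285$, $2\leq r\leq\frac34 s^2$ (the bound on $r$ coming from integrality of $p_{s,2}$, \cref{prop:unensured}) is eliminated by computer. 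Your $s=2$ observation that $\gcd\bigl(2r+1,(r+1)^2(r+2)\bigr)$ divides $3$ is the germ of this resultant argument, but without the smoothness machinery the case $s\leq r$ --- and hence the theorem --- remains unproved.
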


Note that $5000 (n - s) (14.5 + \ln (n - s))^2 \geq 2,000,000 (n - s)$. Combining \cref{thm:jointed,thm:psychophysiology}, we have a quick corollary, which verifies \cref{conj:madness} when $n \leq 4,000,000$.
\begin{corollary} \label{cor:abrachia}
Assume that there exist some tight symmetric inner distribution codes for the Johnson scheme $J(2 n, n)$ of degree $s$. Then, either $s = 1$, or $s = n - 1$, or $s > 0.9999995 n$. In particular, \cref{conj:madness} is true when $n \leq 4,000,000$.
\end{corollary}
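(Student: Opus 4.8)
The plan is to chain the two preceding theorems and finish with elementary arithmetic. Write $r := n - s$, so that $n = r + s$ and $r \geq 1$ (equivalently $s \leq n - 1$), matching the setup of \cref{thm:jointed}. The hypothesis is that a tight symmetric inner distribution code for $J(2n, n)$ of degree $s$ exists; by \cref{thm:jointed} this forces $\Phi_s(z)$ to have $s$ distinct integral zeros, so in particular \emph{all} zeros of $\Phi_s$ are integers. This is exactly the hypothesis of \cref{thm:psychophysiology}, which therefore applies and yields one of $s = 1$, $r = 1$, or $s \geq 5000\,r\,(14.5 + \ln r)^2$. The first alternative is already a conclusion of the corollary, and the second reads $n - s = 1$, i.e. $s = n - 1$.

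It remains to show that the third alternative forces $s > 0.9999995\,n$. Here I would invoke the elementary inequality recorded just before the corollary, $5000\,r\,(14.5 + \ln r)^2 \geq 2{,}000{,}000\,r$, to upgrade the bound to $s \geq 2{,}000{,}000\,r$. Since $r \geq 1$, this gives $s > 1{,}999{,}999\,r = 1{,}999{,}999\,(n - s)$; rearranging yields $0.0000005\,s > 0.9999995\,(n - s)$, that is $s > 0.9999995\,n$. This establishes the trichotomy asserted in the corollary.

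For the final clause I would argue by contradiction: suppose $n \leq 4{,}000{,}000$ but $s \notin \{1, n - 1\}$. By the trichotomy just proved we must be in the third case, so $s > 0.9999995\,n$; on the other hand $s \leq n - 1$ together with $s \neq n - 1$ gives $s \leq n - 2$. Combining, $n - 2 > 0.9999995\,n$, i.e. $0.0000005\,n > 2$, i.e. $n > 4{,}000{,}000$, contradicting $n \leq 4{,}000{,}000$. Hence $s \in \{1, n - 1\}$ whenever $n \leq 4{,}000{,}000$, which is precisely \cref{conj:madness} in that range.

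The one step requiring care, and the main obstacle, is the elementary inequality used to pass from \cref{thm:psychophysiology} to the clean threshold. Dividing by $r$, it asserts $5000\,(14.5 + \ln r)^2 \geq 2{,}000{,}000$, equivalently $14.5 + \ln r \geq 20$, equivalently $r \geq e^{5.5} \approx 245$. This holds comfortably once $r$ is moderately large, but it is tight near $r \approx 245$ and is false for the smallest values of $r$; I would therefore expect to calibrate the constants $0.9999995$ and $4{,}000{,}000$ precisely against the $5000$ of \cref{thm:psychophysiology}, and to dispatch the finitely many small values of $r$ by a separate direct check rather than by the asymptotic bound.
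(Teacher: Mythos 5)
Your route is exactly the paper's: the paper's entire proof of this corollary consists of the one-sentence remark asserting $5000\,(n-s)\,(14.5+\ln(n-s))^2 \geq 2{,}000{,}000\,(n-s)$, followed by chaining \cref{thm:jointed} and \cref{thm:psychophysiology}, and your arithmetic deriving the trichotomy from $s \geq 2{,}000{,}000\,(n-s)$ and the final clause from the trichotomy is correct and matches what the authors intend.

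However, the obstacle you flag in your last paragraph is not a matter of calibration you failed to carry out --- it is a genuine gap, and it is a gap in the paper's own proof, not merely in your write-up. As you compute, the asserted inequality is equivalent to $14.5+\ln r \geq 20$, i.e.\ $r \geq e^{5.5} \approx 244.7$, so it is false for $2 \leq r \leq 244$ (the analogous inequality inside \cref{prop:dominated} is saved only by its hypothesis $s \geq 288$; here $r = n - s$ may be as small as $2$). In that range \cref{thm:psychophysiology} yields only $s \geq 5000\,r\,(14.5+\ln r)^2$, which is strictly weaker than $s \geq 2{,}000{,}000\,r$: in the worst case $r=2$ it gives $s \geq 5000 \cdot 2 \cdot (14.5+\ln 2)^2 \approx 2.31\times 10^6$, hence only $s/n \geq 1 - 8.7\times 10^{-7} \approx 0.9999991$, short of the stated threshold $0.9999995$; correspondingly the ``in particular'' clause would only be forced for $n \lesssim 2.31\times 10^6$, not $4{,}000{,}000$. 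So the corollary as stated is not a formal consequence of the two theorems, and your proposed ``separate direct check'' for small $r$ cannot be done from the theorem statements alone. Closing the gap requires either (a) weakening the corollary's constants to what the $r=2$ case supports (roughly $s > 0.999999\,n$, verifying \cref{conj:madness} for $n \leq 2.3\times 10^6$), or (b) strengthening Case~2.1 in the proof of \cref{thm:psychophysiology} by extending the computer search to $s \leq 2{,}000{,}000\,r$ for $2 \leq r \leq 244$, which would make the paper's asserted bound $s \geq 2{,}000{,}000\,r$ valid in all of the third case.
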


The two cases $s = 1$ and $s = n - 1$ in \cref{conj:madness} are of special interests. We show below that the case $s = 1$ corresponds to Hadamard matrices, and the case $n - s = 1$ corresponds to maximal intersecting families.

{\bf Case} $s = 1$: Let $C$ be such a code achieving \cref{thm:main} with $s = 1$. Then, $C$ is a constant weight code with length $2 n$ and weight $n$, and $2 n - 1$ codewords satisfying $S(C)=\{ n / 2\}$. Let $H$ be an $(2 n - 1) \times 2 n$ matrix whose $i$-th row vector equals to the signed characteristic vector for the set corresponding $i$-th codeword in $C$. Then, the matrix $\begin{pmatrix} {\bf 1}^\top \\ H\end{pmatrix}$ is a Hadamard matrix of order $2 n$.

{\bf Case} $s = n - 1$: Let $C$ be such a code achieving \cref{thm:main} with $s = n - 1$. It holds that $S(C)=\{1, 2, \ldots, n - 1\}$, that is the code $C$ is an {\it intersecting family}. The upper bound is the same as the Erd\H{o}s--Ko--Rado upper bound in  \cite{ErdoesKoRado1961}. Examples $C$ attaining \eqref{thm:main} are $C=\{x\in \binom{2n}{n} : i\in x\}$ for some $i$.   

\subsection{Structure of the paper}

The organization of this paper is as follows. In Section~\ref{sec:la}, we present a direct proof of Theorem~\ref{thm:main} using linear algebraic methods. Section~\ref{sec:d} introduces the concept of $Q$-polynomial association schemes and Delsarte theory, which provide a unified framework for studying the problem of distance sets. Based on this approach, we derive an upper bound for codes with symmetric inner distribution properties in a $Q$-bipartite $Q$-polynomial association scheme, and prove \cref{thm:SCbound}. \cref{sec:cognizant} focuses on an explicit description of the polynomial $\Psi^\ast_s$ used in \cref{thm:SCbound}. We prove that $\Psi^\ast_s$ is essentially $\Phi_s$ in \cref{eq:pleiotaxis}, and prove \cref{thm:jointed}. \cref{sec:Ranzania} studies the zeros of $\Phi_s$, and prove \cref{thm:psychophysiology}.

\subsection{Notation}

Throughout the paper, we identify a subset $F$ in $\{1,\ldots,2n\}$ with its characteristic vector, that is a $(0,1)$-vector indexed by the elements of $\{1,\ldots,2n\}$ with $i$-th entry $1$ if $i\in F$ and $0$ if $i\not\in F$.  
For a positive integer $n$, let $[n]=\{1,\ldots,n\}$ and $\binom{[n]}{w}$ be the set of all subset of size $w$ in $[n]$. 

\section{A linear algebraic method}\label{sec:la}

\subsection{Functions on the finite sets}

Let $\cT \subseteq \{-1,1\}^{2n}$ be a fixed subset. Denote by 
$\func(\cT,\R):=\{f: \cT\to \R: f\mbox{ is a function}\}$ the set  of all functions from the set $\cT$ to the field $\R$. 

For $k\in\{s-1,s\}$, let
\begin{align*}
\cR(2n,k)&:=\left\{\alpha=(\alpha_1, \ldots , \alpha_{2n})\in \{0,1\}^{2n}: \sum_{i=1}^{2n} \alpha_i \equiv k \pmod 2,\ \sum_{i=1}^{2n}\alpha_i\leq k\right\}, 
\end{align*}
 and let $S(2n,k)$ denote the subspace of the vector space $\func(\{-1,1\}^{2n},\R)$ generated by the set of monomials
$
\{x^{\alpha}: \alpha\in \cR(2n,k)\}.  
$
Then it follows easily from \cite[Theorem 7.8]{BabaiFrankl2022} and the definition of $\cR(2n,k)$ that 
\begin{align}\label{eq:dims}
\dim(S(2n,k))=\sum_{\substack{i=0,1,\ldots, k\\ i\equiv k\pmod{2}}}\binom{2n}{i}.
\end{align}

A \emph{signed characteristic vector} of a subset $F$ in $[2n]$ is a function $\ve v$ from the set $[2n]$ to the real number field $\mathbb{R}$ such that $\ve v(k)=1$ for $k\in F$, $\ve v(k)=-1$ otherwise. 

For a polynomial $f=f(x_1,\ldots,x_{2n})$,  let $\overline{f}$ denote the polynomial obtained from $f$ by replacing each  occurrence of $x_j^2$, $1\leq j\leq 2n$,  by $1$.
Note that $f(\ve v)=\overline{f}(\ve v)$ for any $(1,-1)$-vector $\ve v$.  

Let $\cV_{< n}:=\{\ve u\in\{-1,1\}^{2n}: |\{i: u_i=1\}|< n \}$ where $\ve u=(u_1,\ldots,u_{2n})$. 
Let $t:=|\cV_{<n}|$. 
We can write up $\cV_{< n}$ as the set of vectors $\cV_{< n}=\{\ve w_1, \ldots ,\ve w_t\}$.  Define the polynomial $h(\ve x):=\sum_i x_i$. Clearly $h(\ve w_j)\neq 0$ for each $1\leq j\leq t$.            
Let $\cW:=\{\ve u\in\{-1,1\}^{2n}: |\{i: u_i=1\}|=n \}$. 
Note that the signed characteristic vectors of $C$ belong to $\cW$ and $h(\ve w)=0$ for each $\ve w\in \cW$.

For $\alpha\in \cR(2n,s-1)$, let $g_\alpha:=x^\alpha \cdot h$. 
Clearly $g_\alpha$ is a homogeneous polynomial and $\deg(g_\alpha)\equiv s \pmod 2$, hence $\overline{g}_\alpha\in S(2n,s)$.

\begin{proposition} \label{indep}
Let 
$$
\cM:=\{\overline{g}_\alpha : \alpha\in \cR(2n,s-1)\}.
$$
Then $\cM$ is a linearly independent set of functions in the vector space $\func(\{-1,1\}^{2n},\R)$. 
\end{proposition}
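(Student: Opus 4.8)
The plan is to exploit the grading of $\func(\{-1,1\}^{2n},\R)$ by degree. Recall that the reduced multilinear monomials $\{x^\beta : \beta \in \{0,1\}^{2n}\}$ form a basis of $\func(\{-1,1\}^{2n},\R)$ (there are $2^{2n}$ of them, they span because $x_i^2 = 1$ on the cube, and $\dim\func(\{-1,1\}^{2n},\R)=2^{2n}$), so any linear relation among the $\overline{g}_\alpha$ decomposes into its homogeneous components, which I can treat one degree at a time. Writing $e_j$ for the $j$-th standard unit vector, expanding $g_\alpha = x^\alpha\sum_j x_j$ and reducing $x_j^2 \mapsto 1$ gives
\[
  \overline{g}_\alpha = \sum_{j : \alpha_j = 0} x^{\alpha + e_j} \;+\; \sum_{j : \alpha_j = 1} x^{\alpha - e_j},
\]
a homogeneous piece of degree $|\alpha|+1$ plus one of degree $|\alpha|-1$. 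Since $\alpha$ ranges over $\cR(2n,s-1)$, its weight $|\alpha|$ runs through $M := \{s-1, s-3, \dots\}$, so the degrees occurring in a relation $\sum_\alpha c_\alpha\overline{g}_\alpha = 0$ are $s, s-2, s-4, \dots$.

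Next I would translate the homogeneous components into operators on the Boolean lattice. Identify each coefficient $c_\alpha$ with a function $c^{(m)}$ on the $m$-subsets of $[2n]$ via the support of $\alpha$, where $m=|\alpha|$. A degree-$d$ term of $\overline{g}_\alpha$ equals a fixed multilinear monomial $x^\gamma$ precisely when $\alpha = \gamma - e_j$ with $j\in\gamma$ (from the $+$ part, $|\alpha|=d-1$) or $\alpha=\gamma+e_j$ with $j\notin\gamma$ (from the $-$ part, $|\alpha|=d+1$); matching coefficients shows the degree-$d$ part of the relation is exactly
\[
  U_{d-1}\, c^{(d-1)} \;+\; D_{d+1}\, c^{(d+1)} = 0,
\]
where $U_m$ is the up operator sending a function on $m$-sets to $\gamma\mapsto\sum_{j\in\gamma}c(\gamma\setminus j)$ on $(m+1)$-sets, and $D_m$ is the adjoint down operator $\gamma\mapsto\sum_{j\notin\gamma}c(\gamma\cup j)$ on $(m-1)$-sets (a term is dropped when the corresponding level is absent). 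At the top degree $d=s$ there is no $c^{(s+1)}$, so the equation reads $U_{s-1}c^{(s-1)}=0$. I would then run a descending induction: once $c^{(m+2)}=0$ is known, the degree-$(m+1)$ equation collapses to $U_m c^{(m)}=0$, forcing $c^{(m)}=0$, and so on down through $M$. Hence everything reduces to injectivity of $U_m$ for $m\le s-1$.

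The main obstacle --- indeed the only non-formal ingredient --- is this injectivity of the up operator, which holds exactly when $m<n$. In our setting $m\le s-1\le n-2$: the symmetric-distance hypothesis forces $n\notin S(C)$ (otherwise $0=n-n\in S(C)$, impossible), so $s\le n-1$. I would either cite the classical full-rank theorem for inclusion matrices of $m$- versus $(m+1)$-subsets of an $N$-set, valid whenever $m+(m+1)\le N$ (here $N=2n$; see e.g.\ \cite{BabaiFrankl2022}), or give the short spectral proof from the commutation relation $U_m^\ast U_m - U_{m-1}U_{m-1}^\ast = (2n-2m)I$ on the space of functions on $m$-sets: since $U_{m-1}U_{m-1}^\ast$ is positive semidefinite, $U_m^\ast U_m \succeq (2n-2m)I \succ 0$ for $m<n$, so $U_m$ is injective. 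Feeding this into the descending induction yields $c^{(m)}=0$ for every $m\in M$, i.e.\ all $c_\alpha=0$, which establishes the linear independence of $\cM$.
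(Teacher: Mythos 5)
Your proof is correct, and it takes a genuinely different route from the paper's. The paper works on the evaluation side: it restricts all functions to the set $\cV_{<n}$ of $(\pm 1)$-vectors with fewer than $n$ ones, where $h$ never vanishes, observes that the evaluation matrix $B$ of the $g_\alpha$'s is obtained from the evaluation matrix $A$ of the monomials $x^\alpha$ by rescaling each row by the nonzero scalar $h(\ve w_i)$, and quotes full column rank of $A$ (via \cref{eq:dims}, i.e.\ \cite[Theorem~7.8]{BabaiFrankl2022}) to conclude $\rk(B)=\rk(A)=z$; independence as functions on $\cV_{<n}$ then implies independence on all of $\{-1,1\}^{2n}$. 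You never evaluate at points: you expand each $\overline{g}_\alpha$ in the multilinear basis, split a putative relation into its homogeneous pieces, and reduce by descending induction to injectivity of the up operators $U_m$ for $m\le s-1$, which you then prove from scratch via the commutation identity $U_m^\ast U_m-U_{m-1}U_{m-1}^\ast=(2n-2m)I$; both that identity and your identification of the degree-$d$ piece of the relation with $U_{d-1}c^{(d-1)}+D_{d+1}c^{(d+1)}$ are correct. What each approach buys: the paper's proof is shorter but buries the essential rank fact in the Babai--Frankl citation, whereas yours is self-contained and exposes the structure really at work---multiplication by $h$ acts as ``up plus down'' on the levels of the Boolean lattice, and the up operator is injective strictly below the middle level. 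A further merit of your write-up is that it makes explicit a hypothesis the proposition silently inherits from its context, namely $s\le n-1$ (equivalently $n\notin S(C)$, which the symmetric-distance assumption forces since $0\notin S(C)$). Some such restriction is genuinely needed: for instance, when $s=n+1$ a nonzero middle-level function killed by both $U_n$ and $D_n$ (such functions exist, as the $\mathfrak{sl}_2$-strings starting at the middle layer are one-dimensional) yields a nontrivial relation among the $\overline{g}_\alpha$; in the paper this restriction is visible only implicitly, inside the claim $\rk(A)=z$.
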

\begin{proof}
We can write up $S(2n,s-1)$ as the set of monomials $S(2n,s-1)=\{x^{\alpha_1}, \ldots ,x^{\alpha_z}\}$, where $z:=|S(2n,s-1)|$.

Let $A,B$ be the $t\times z$ matrices defined by 
$$
A(i,j):=x^{\alpha_j}(\ve w_i),\quad B(i,j):= (h\cdot x^{\alpha_j})(\ve w_i),
$$
where $i\in [t]$, $j\in [z]$. Then by \cref{eq:dims}, $\rk(A)=z$. 

Since $h(\ve w_i)\neq 0$ for each $1\leq i\leq t$, hence each row of $B$ is a nonzero multiple of the corresponding row of $A$. We get that $\rk(B)=z$. This means precisely that the set of polynomials
$$
\{ g_\alpha : \alpha\in \cR(2n,s-1)\}
$$
is a linearly independent set of functions in the vector space $\func(\cV_{<n},\R)$.

Hence the set of polynomials $\cM$ is a linearly independent set of functions in the vector space $\func(\cV_{<n},\R)$, consequently in the vector space  $\func(\{-1,1\}^{2n},\R)$. 
\qed

\subsection{Proof of Theorem~\ref{thm:main}}
 Let $C=\{F_1, \ldots ,F_m\}$
  be a subset of $\binom{[2n]}{n}$ with degree $s$ and degree set $S(C)=\{n-|F\cap F'|: F,F'\in C,F\neq F'\}$ satisfying that $n-a\in S(C)$ if $a\in S(C)$. 
Let $\ve v_i$ denote the signed characteristic vector of the subset $F_i\in C$ in $[2n]$.
Consider the polynomials
\begin{equation}  \label{Ppol} 
P_i(x_1, \ldots ,x_{2n}):= \prod_{d\in S(C)} \Big( \langle \ve x, \ve v_i\rangle-2(n-2d) \Big)\in \R[\ve x]
\end{equation}
for each $1\leq i\leq m$, where $\langle \ve x, \ve v_i\rangle$ denotes the usual scalar product of the vectors $\ve x$ and $ \ve v_i$. 
Clearly $\mbox{deg}(P_i)\leq s$ for each $1\leq i\leq m$. 
Since $S(C)$ is symmetric with respect to $n/2$, the set $\{2(n-2d) : d\in S(C)\}$ is symmetric with respect to $0$.  
Then it follows that each $\overline{P}_i\in S(2n,s)$. 

\begin{claim}\label{cl:1}
The set of polynomials $\{\overline{P}_i: 1\leq i\leq m\}\cup \cM$ is a linearly independent set of functions in the vector space $\func(\{-1,1\}^n,\R)$.
\end{claim}
\begin{proof}[Proof of Claim~\ref{cl:1}]
Assume that
$$
\sum_{i=1}^m \lambda_i\overline{P}_i+ \sum_{\alpha\in \cR(2n,s-1)} \mu_\alpha h\cdot x^\alpha=0
$$
for some $\lambda_i, \mu_\alpha\in \R$. Substituting $\ve v_i$, all terms in the second sum vanish, because $h(\ve w)=0$ for each $\ve w\in \cW$, and only the term with subscript $i$ remains of the first sum. 
We infer that $\lambda_i=0$ for each $i$ because $\overline{P}_i(\ve v_j)$ is non-zero if $i=j$ and zero otherwise. 
Therefore the previous linear relation is only a relation among the polynomials $h\cdot x^\alpha$. Then the claim follows from  Proposition~\ref{indep}.
\end{proof}

We thus found $m+|\cM|$ linearly independent functions, all of which is an element of the set of polynomials $S(2n,s)$, 
hence by \cref{eq:dims} with $|\cM|=\dim(S(2n,s-1))$ and the identity \cite[(5.16)]{GrahamKnuthPatashnik1994}, 
$$
m\leq \dim(S(2n,s))-\dim(S(2n,s-1))=\sum_{\substack{0 \leq i \leq s\\ i\equiv s\pmod{2}}}\binom{2n}{i}- \sum_{\substack{0 \leq i \leq s-1\\ i\equiv s-1\pmod{2}}}\binom{2n}{i}= \binom{2n-1}{s},
$$
which completes the proof of Theorem~\ref{thm:main}. 
\end{proof}

\section{Q-bipartite Q-polynomial association scheme approach} \label{sec:d}
\subsection{Association schemes}
We refer the reader to \cite{BannaiIto1984} for more informations. 
Let $X$ be a finite set and $\{R_0,R_1,\ldots,R_n\}$ be a set of non-empty subsets of $X\times X$.
Let $A_i$ be the adjacency matrix of the graph $(X,R_i)$.
The pair $(X,\{R_i\}_{i=0}^n)$ is a symmetric association scheme with class $n$ if the following hold:
\begin{enumerate}
\item $A_0 $ is the identity matrix;
\item $\sum_{i=0}^nA_i=J$, where $J$ is the all ones matrix;
\item $A_i^\top=A_i$ for $1\leq i\leq n$;
\item $A_iA_j$ is a linear combination of $A_0,A_1,\ldots,A_n$ for $0\leq i,j\leq n$.
\end{enumerate}
The vector space $\mathcal{A}$ spanned by the $A_i$ over $\mathbb{R}$ forms an algebra which is called the adjacency algebra of $(X,(X,\{R_i\}_{i=0}^n))$.
Since $\mathcal{A}$ is commutative and semisimple, there exist primitive idempotents $\{E_0,E_1,\ldots,E_n\}$ where $E_0=\frac{1}{|X|}J$.
Since the adjacency algebra $\mathcal{A}$ is closed under the ordinary multiplication and entry-wise multiplication denoted by $\circ$, 
we define the intersection numbers $p_{i,j}^k$ and the Krein numbers $q_{i,j}^k$ for $0\leq i,j,k\leq d$ as follows;
\begin{align*}
A_iA_j=\sum_{k=0}^n p_{i,j}^kA_k,\quad E_i\circ E_j=\frac{1}{|X|}\sum_{k=0}^n q_{i,j}^kE_k.
\end{align*}

Since $\{A_0,A_1,\ldots,A_n\}$ and $\{E_0,E_1,\ldots,E_n\}$ form bases of $\mathcal{A}$, there exist change of bases matrices $P$ and $Q$ defined by
\begin{align*}
A_i=\sum_{j=0}^n P_{ji}E_j,\quad E_i=\frac{1}{|X|}\sum_{j=0}^n Q_{ji}A_j.
\end{align*}
The matrices $P$ and $Q$ are called the first and second eigenmatrices of $(X,\{R_i\}_{i=0}^n)$ respectively.
The multiplicity $m_i$ is defined to be $m_i=Q_{0i}=\text{rank}E_i$.  

The symmetric association scheme $(X,\{R_i\}_{i=0}^n)$ is said to be $Q$-polynomial 
if for each $i\in\{0,1,\ldots,n\}$, there exists a polynomial $v_i^*(x)$ with degree $i$ such that $Q_{ji}=v_i^*(Q_{j1})$ for all $j\in\{0,1,\ldots,n\}$.
If $(X,\{R_i\}_{i=0}^n)$ is $Q$-polynomial, we define $a_i^*=q_{1,i}^i$, $b_i^*=q_{1,i+1}^i$, and $c_i^*=q_{1,i-1}^i$. 
Then $\{v_i^*(x)\}_{i=0}^n$ satisfy that $v_{0}^*(x)=1$, $v_1^*(x)=x$ and $x v_i^*(x)=c_{i+1}^*v_{i+1}^*(x)+a_{i}^*v_{i}^*(x)+b_{i-1}^*v_{i-1}^*(x)$ for $1\leq i \leq n$, where $v_{n+1}^*(x)$ is indeterminate. 
When $(X,\{R_i\}_{i=0}^n)$ is $Q$-polynomial, set $\theta_j^*=Q_{j1}$ for each $j\in\{0,1,\ldots,n\}$.   
 $\theta_j^*$ are called \emph{dual-eigenvalues} of the $Q$-polynomial scheme. 
The $Q$-polynomial scheme is said to be {\it $Q$-bipartite} if $a_i^*=0$ for $i=1,\ldots,n$. 
Then it holds that $\theta^*_{n-i}=-\theta^*_i$ for any $i$. 
Define 
\[
\Psi_s^*(z)=\sum_{\substack{0 \leq i \leq s\\ i\equiv s\pmod{2}}}v_i^*(z). 
\]
\begin{example}
The following are examples of $Q$-bipartite $Q$-polynomial schemes:
\begin{itemize}
\item The Hamming association scheme $H(n,2)$ is a pair $(X,\{R_i\}_{i=0}^n)$ where $X=\{0,1\}^n$ and $R_i=\{(x,y)\in X\times X : d_H(x, y)=i\}$ for $i\in\{0,1,\ldots,n\}$, where $d_H(x,y)$ is the Hamming distance between $x$ and $y$. 
The scheme is a $Q$-bipartite $Q$-polynomial association scheme \cite[Section III.2]{BannaiIto1984}. 
\item The Johnson association scheme $J(2n,n)$ is a pair $(X,\{R_i\}_{i=0}^n)$ where $X=\binom{[2n]}{n}$ is the set of all subset of size $n$ in $[2n]$ and $R_i=\{(x,y)\in X\times X : d_J(x,y)=i\}$ for $i\in\{0,1,\ldots,n\}$. 
The scheme is a $Q$-bipartite $Q$-polynomial association scheme \cite[Section III.2]{BannaiIto1984}. 
\end{itemize} 
\end{example}

\subsection{Symmetric inner distribution codes in $Q$-bipartite $Q$-polynomial schemes}
Let $(X,\{R_i\}_{i=0}^n)$ be a $Q$-polynomial association scheme. 
Let $C$ be a subset of $X$, called a \emph{code}. 
For a non-empty subset $C$ in $X$, we define the characteristic vector $\chi$ as a column vector indexed by $X$ whose $x$-th entry is $1$ if $x\in C$, and $0$ otherwise. 

We define the degree set $S(C)$ of $C$ as 
\begin{align*}
S(C)=\{i : 1\leq i \leq n, \chi^\top A_i\chi\neq0 \},
\end{align*}
and also define degree $s$ 
as the cardinality of the degree set $S(C)$. 
When the scheme is $Q$-polynomial, a polynomial $F(x)$ is called an \emph{annihilator polynomial} of $C$ if 
\begin{align*}
F(\theta_i^*)=0\ \text{ for any } i\in S(C).
\end{align*}
Since $E_i$ is positive semidefinite, there exits a $|X|\times m_i$ real matrix $H_i$ such that $E_i=\frac{1}{|X|}H_i H_i^\top$. 
The $i$-th characteristic matrix $G_i$ fof $C$ is a submatrix of $H_i$ obtained by restricting the rows to the elements of $C$.

Let $(X,\{R_i\}_{i=0}^n)$ be a $Q$-bipartite $Q$-polynomial association scheme. 
A code $C$ is said to be a {\it symmetric inner distribution code} if $a\in S(C)$ implies  $n-a\in S(C)$.

\begin{lemma}\label{lem:eq}
Let $S$ be a subset of $\{1,2,\ldots,n-1\}$ such that $|S|=s$, 
and if $a\in S $ then $n-a\in S$.
Then 
$F(z)=\prod_{i\in S}(z-\theta_i^*)$ 
has the following expansion by the orthogonal polynomials $v_i^*(z)$:  
\begin{align}\label{eq:alpha}
F(z)
=\sum_{\substack{0 \leq i \leq s\\ i\equiv s\pmod{2}}}f_i v_i^*(z),
\end{align}
where $f_i \in \mathbb{R}$.
\end{lemma}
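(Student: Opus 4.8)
The plan is to turn the two symmetry hypotheses into matching parity statements and then compare coefficients. Concretely, the $Q$-bipartite assumption forces each $v_i^*$ to be an even or odd function according to the parity of $i$, while the symmetry of the index set $S$ forces the root set of $F$ to be symmetric about the origin, so that $F$ itself has the parity of $s$. Equating these two parities in the expansion of $F$ in the basis $\{v_i^*\}$ will annihilate exactly the coefficients $f_i$ with $i \not\equiv s \pmod 2$.

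First I would establish that $v_i^*(-z) = (-1)^i v_i^*(z)$. Since the scheme is $Q$-bipartite, $a_i^* = 0$ for every $i$, so the three-term recurrence collapses to $z v_i^*(z) = c_{i+1}^* v_{i+1}^*(z) + b_{i-1}^* v_{i-1}^*(z)$. Starting from $v_0^*(z) = 1$ and $v_1^*(z) = z$ and inducting on $i$ (using $c_{i+1}^* \neq 0$), both terms on the right-hand side have parity $(-1)^{i+1}$, so $v_{i+1}^*$ does too; this gives the claimed parity for all $i$.

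Next I would show $F(-z) = (-1)^s F(z)$. The $Q$-bipartite property gives $\theta_{n-i}^* = -\theta_i^*$, and by hypothesis $i \in S$ iff $n - i \in S$, so the involution $i \mapsto n - i$ pairs the factors of $F$: the pair $\{i, n-i\}$ contributes $(z - \theta_i^*)(z + \theta_i^*) = z^2 - (\theta_i^*)^2$, an even factor, while the only possible fixed point $i = n/2$ (when $n$ is even) contributes the odd factor $z$, since $\theta_{n/2}^* = 0$. Thus $F$ is a product of even quadratics together with at most one factor $z$, the latter occurring precisely when $s$ is odd; hence $F(-z) = (-1)^s F(z)$.

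To finish, expand $F$ in the basis $\{v_0^*, \ldots, v_s^*\}$ --- legitimate since $\deg v_i^* = i$ and $\deg F = s$ --- writing $F = \sum_{i=0}^s f_i v_i^*$. Applying $z \mapsto -z$ and using the parity of the $v_i^*$ yields $\sum_i (-1)^i f_i v_i^* = (-1)^s \sum_i f_i v_i^*$; comparing coefficients in the basis gives $((-1)^i - (-1)^s) f_i = 0$, so $f_i = 0$ whenever $i \not\equiv s \pmod 2$, which is exactly \eqref{eq:alpha}. I expect no real obstacle here: the only point needing care is the bookkeeping of the fixed point $i = n/2$ in the pairing argument, to confirm that the parity of $F$ comes out as $(-1)^s$ uniformly; the rest is routine once the recurrence-induced parity of the $v_i^*$ is in hand.
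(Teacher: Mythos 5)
Your proof is correct and follows essentially the same route as the paper's: both establish that $F$ has the parity of $s$ via the pairing $\{i, n-i\}$ (with $\theta^*_{n/2}=0$ handling the fixed point) and that each $v_i^*$ has parity $(-1)^i$, then conclude by comparing parities in the expansion. The only difference is that you supply the details the paper leaves implicit --- the induction on the $Q$-bipartite recurrence giving $v_i^*(-z)=(-1)^i v_i^*(z)$ and the explicit coefficient comparison --- which is a welcome tightening rather than a new idea.
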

\begin{proof}
When $s$ is even, we may write
 $S=\{a_1,a_2,\ldots,a_{s/2},n-a_1,n-a_2,
\ldots,n-a_{s/2}\}$, where
 $0<a_i <n/2$ for $1\leq i\leq s/2$. 
Then by the fact that $\theta^*_{n-i}=-\theta^*_i$, we have
\begin{align*}
F(z)
=\prod_{i=1}^{s/2}
\left(z-\theta^*_{a_i}\right)\left(z-\theta^*_{n-a_{i}}\right)
=\prod_{i=1}^{s/2}
\left(z-\theta^*_{a_i}\right)\left(z+\theta^*_{a_i}\right)
=\prod_{i=1}^{s/2}
\left(z^2-(\theta^*_{a_i})^2\right).
\end{align*}
Thus, $F(z)$ is 
an even polynomial in variable $z$.

When $s$ is odd, we may write $S=\{a_1,a_2,\ldots,a_{(s-1)/2},n/2,
n-a_1,n-a_2,\ldots,n-a_{(s-1)/2}\}$, where $0<a_i <n/2$ for $1\leq i\leq (s-1)/2$ and $n$ must be even. 
Similar to the case where $s$ is even, we have 
\begin{align*}
F(z)
&=z\prod_{i=1}^{(s-1)/2}\left(z^2-(\theta^*_{a_i})^2\right). 
\end{align*}
Thus, $F(z)$ is 
an odd polynomial in variable $x$.

It can be shown that $v_i^*(z)$ is an even (resp.\ odd) 
polynomial of degree $i$ in variable $x$ if $i$ is an even (resp.\ odd),  
from which  the expansion of $F(z)$ by the 
orthogonal polynomials has the desired form~\eqref{eq:alpha}.
\end{proof}

Now, we are ready to prove \cref{thm:SCbound}.

\begin{proof}[Proof of \cref{thm:SCbound}]
(i): The annihilator polynomial of $C$ is defined as follows:
\begin{align*}
F(z)&=|C|\prod_{i\in S(C)}\frac{z-\theta_i^*}{\theta_0^*-\theta_i^*}.
\end{align*}
Since $C$ is a symmetric inner distribution code, 
the annihilator polynomial $F(z)$ of $C$ 
has the following expansion by Lemma~\ref{lem:eq}: 
\begin{align}\label{eq:coef}
F(z)
=\sum_{\substack{0 \leq i \leq s\\ i\equiv s\pmod{2}}}f_i v_i^*(z).
\end{align}

Set $K=\begin{pmatrix}G_0&G_2&\cdots &G_s \end{pmatrix}$ 
if $s$ is even and  
$K=\begin{pmatrix}G_1&G_3&\cdots &G_s \end{pmatrix}$
if $s$ is odd, 
where $G_i$ is the $i$-th characteristic matrix of $C$,
and set
\begin{align*}
\Gamma=\bigoplus_{\substack{0 \leq i \leq s\\ i\equiv s\pmod{2}}}f_i I_{m_i},
\end{align*} 
where $I_m$ is the identity matrix of order $m$. 
By \cite[Theorem~3.13]{Delsarte1973}, we have 
$K\Gamma K^\top=|C|I_{|C|}$. 
Taking the rank of the equation yields that 
\begin{align*}
|C|=\rank(K\Gamma K^\top)\leq\rank(K)\leq 
\sum_{\substack{0 \leq i \leq s\\ i\equiv
s\pmod{2}}}m_i, 
\end{align*}
as desired. 

(ii): 
We investigate when equality hold. 
Assume equality holds in Theorem~\ref{thm:main}. 
It follows from \cite[Lemma~3.6]{Delsarte1973} that $f_i$ in \eqref{eq:coef} for all $i$ satisfy $f_i\leq 1$. 
Since 
\[
|C|=F(\theta^\ast_0)=\sum_{\substack{0 \leq i \leq s\\ i\equiv s\pmod{2}}}f_i v_i^*(\theta^\ast_0)=\sum_{\substack{0 \leq i \leq s\\ i\equiv s\pmod{2}}}f_i m_i\leq \sum_{\substack{0 \leq i \leq s\\ i\equiv s\pmod{2}}}m_i,
\]
$f_i=1$ must hold for all $i\equiv s \pmod{2}$. 
Therefore its annihilator polynomial satisfies that 
$F(\theta^\ast_z)=\Psi_s^*(z)$. 
Since the polynomial $F(\theta^\ast_z)$ is an annihilator polynomial of $C$, the result follows. 
\end{proof}

\section{Johnson scheme $J(2n, n)$} \label{sec:cognizant}

We focus on the Johnson scheme $J(2n, n)$ in this section. The goal is to provide an explicit formula for the polynomial $\Psi^\ast_s(x)$ for Johnson scheme $J(2n, n)$. The dual orthogonal polynomials for $J(2 n, n)$ are $v^\ast_i(x) \in \QQ(n)[x]$, defined by the three-term recursion
\begin{equation} \label{eq:slee}
  x v^\ast_i(x) = \frac{(2 n - 1) (i + 1) (n - i)}{n (2 n - 2 i - 1)} v^\ast_{i + 1}(x) + \frac{(2 n - 1) (n - i + 1) (2 n - i + 2)}{n (2 n - 2 i + 3)} v^\ast_{i - 1}(x)
\end{equation}
with $v^\ast_{-1}(x) = 0$ and $v^\ast_0(x) = 1$.

The \cref{prop:amphivorous} below proves that $\Psi^\ast_s(x)$ is essentially the same with the polynomial $\Phi_s(z)$ defined in \cref{eq:pleiotaxis}, up to some scalar and change of variable. \cref{thm:jointed} is a direct corollary of \cref{prop:amphivorous,thm:SCbound}, and the proof of \cref{thm:jointed} is given at the end of this section.

\begin{proposition} \label{prop:amphivorous}
Let $\theta_z := (2 n - 1)(1 - 2 z / n) \in \QQ(n)[z]$. Then,
\[
   \Psi^\ast_s(\theta_z) = \frac{2^{2 s} }{s!} \frac{(n - 1 / 2)^{\underline{s}} }{n^{\underline{s}}} \Phi_s(z).
\]
\end{proposition}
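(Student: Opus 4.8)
The plan is to pass from the recursively defined dual polynomials $v^\ast_i$ to explicit polynomials in the single variable $z$, and then to evaluate the partial sum defining $\Psi^\ast_s$. I would write $u_i(z) := v^\ast_i(\theta_z) \in \QQ(n)[z]$ and substitute $x = \theta_z = (2n-1)(1 - 2z/n)$ into the three-term recursion \eqref{eq:slee}; since $\theta_z$ is affine in $z$, this produces a three-term recursion for the $u_i$ in the variable $z$, with $u_{-1} = 0$, $u_0 = 1$, and $\deg u_i = i$. Because $\theta_{n-z} = -\theta_z$ and, by the parity statement in Lemma~\ref{lem:eq}, $v^\ast_i$ is an even or odd polynomial according to the parity of $i$, each $u_i$ satisfies $u_i(n - z) = (-1)^i u_i(z)$; this symmetry about $z = n/2$ keeps the even-index and odd-index contributions to $\Psi^\ast_s$ from interfering and matches the parity of $\Phi_s$.

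As a first, organizing consistency check I would pin down the leading coefficient, which already forces the normalizing constant $c_s := \frac{2^{2s}}{s!}\frac{(n-1/2)^{\underline s}}{n^{\underline s}}$. Writing $A_i = \frac{(2n-1)(i+1)(n-i)}{n(2n-2i-1)}$ for the coefficient of $v^\ast_{i+1}$ in \eqref{eq:slee}, iterating the recursion gives leading coefficient $\bigl(\prod_{i=0}^{s-1} A_i\bigr)^{-1}$ for $v^\ast_s$ in $x$; using $\prod_{i=0}^{s-1}(2n - 2i - 1) = 2^s (n - 1/2)^{\underline s}$ and $\prod_{i=0}^{s-1}(n-i) = n^{\underline s}$, and combining with the leading term $-\tfrac{2(2n-1)}{n} z$ of $\theta_z$, one finds that $u_s$, hence $\Psi^\ast_s(\theta_z)$, has leading coefficient $\frac{(-1)^s 2^{2s}}{s!}\frac{(n-1/2)^{\underline s}}{n^{\underline s}} = (-1)^s c_s$. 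This matches the leading coefficient of $c_s \Phi_s(z)$, since $\Phi_s$ has leading term $(-1)^s z^{\underline s} p_{s,0}$ with $p_{s,0} = 1$.

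The heart of the argument is then to produce a closed form for $u_i(z)$ --- the $u_i$ are, up to scaling, the dual Hahn polynomials attached to $J(2n,n)$, so I would solve the recursion to obtain an expansion $u_i(z) = \sum_{k=0}^i a_{i,k}\, z^{\underline k}$ with hypergeometric coefficients $a_{i,k}$ --- and then to evaluate
\[
\Psi^\ast_s(\theta_z) = \sum_{\substack{0 \le i \le s \\ i \equiv s \pmod{2}}} u_i(z) = \sum_{j=0}^s \Bigl( \sum_{\substack{j \le i \le s \\ i \equiv s \pmod{2}}} a_{i,j} \Bigr) z^{\underline j}.
\]
Reindexing $\Phi_s$ in \eqref{eq:pleiotaxis} by $j = s - i$ and substituting $r = n - s$, the proposition becomes the finite family of identities asserting that the inner sum equals $(-1)^j c_s\, p_{s, s - j}$ for each $j$.

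The main obstacle is precisely this coefficient bookkeeping. The target coefficients $p_{s,i}$ carry the floor $\lfloor i/2\rfloor$ and an awkward mixture of rising and falling factorials, $(r + i - 1)^{\underline{\lfloor i/2\rfloor}}$, $(r+1)^{\overline i}$, and the half-integer denominators $(r + 1/2)^{\overline{\lfloor i/2\rfloor}}$, so the even and odd cases $i \equiv s \pmod{2}$ must be handled separately. I expect the cleanest route is an induction on $s$ through the relation $\Psi^\ast_s = v^\ast_s + \Psi^\ast_{s-2}$: assuming the formula for $\Psi^\ast_{s-2}(\theta_z)$, which carries the shifted parameter $r + 2 = n - (s-2)$, and adding the closed form of $u_s(z)$, one checks that the two pieces recombine into $c_s \Phi_s(z)$. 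Controlling the shift $r \mapsto r + 2$ between consecutive same-parity levels, together with the half-integer Pochhammer symbols, is the delicate step; each resulting coefficient identity should reduce to a terminating hypergeometric summation of Chu--Vandermonde or Pfaff--Saalschütz type, with the symmetry from Lemma~\ref{lem:eq} ensuring the even and odd subfamilies stay decoupled throughout.
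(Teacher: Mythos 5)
Your substitution of $x = \theta_z$ into \eqref{eq:slee}, the parity symmetry $u_i(n - z) = (-1)^i u_i(z)$, and the leading-coefficient computation pinning down the constant $c_s = \frac{2^{2s}}{s!}\frac{(n - 1/2)^{\underline{s}}}{n^{\underline{s}}}$ are all correct, and the first of these is also the paper's starting point. But the heart of your argument is missing: you never produce the closed form $u_i(z) = \sum_{k} a_{i,k}\, z^{\underline{k}}$, and you never carry out the summation over $i \equiv s \pmod{2}$ that is supposed to yield $(-1)^j c_s\, p_{s, s-j}$; both are deferred to identities that ``should reduce to'' Chu--Vandermonde or Pfaff--Saalsch\"utz. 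Since the entire difficulty of the proposition lives exactly in this bookkeeping --- the floors $\lfloor i/2 \rfloor$, the half-integer Pochhammer symbols, and the parameter shift $r \mapsto r + 2$ between consecutive same-parity levels, as you yourself note --- a proof that stops there has established only the leading term. As written, this is a plan whose one nontrivial step is left unchecked.

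The paper's proof avoids closed forms for the individual $u_i$ altogether, and the device is worth internalizing. Writing $\Phi'_s := c_s \Phi_s$, it uses $v^\ast_s(\theta_z) = \Psi^\ast_s(\theta_z) - \Psi^\ast_{s-2}(\theta_z)$ and shows that the differences $\Phi'_s(z) - \Phi'_{s-2}(z)$ satisfy the same three-term recursion \eqref{eq:Betty} (viewed in the index $s$) with the same initial conditions; uniqueness of the solution of the recursion then forces $\Phi'_s(z) = \Psi^\ast_s(\theta_z)$. The verification is made finite by the multiplication rule $(n - 2z)\left((-1)^t z^{\underline{t}}\right) = 2\left((-1)^{t+1} z^{\underline{t+1}}\right) + (n - 2t)\left((-1)^t z^{\underline{t}}\right)$, which keeps everything inside the falling-factorial basis: extracting the coefficient of $(-1)^t z^{\underline{t}}$ turns the claimed identity into a single rational-function identity in $\QQ(n, s, t)$ (one for each parity of $s - t$, which is where the floors get absorbed), checked by direct expansion. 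If you want to salvage your route, note that your inductive variant via $\Psi^\ast_s = v^\ast_s + \Psi^\ast_{s-2}$ becomes essentially the paper's argument once you replace ``add the closed form of $u_s$'' by ``show that $\Phi'_s - \Phi'_{s-2}$ obeys the recursion defining $u_s$'' --- that substitution is precisely what lets you dodge the hypergeometric summations.
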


\begin{proof}
Substitute $x = \theta_z$ into the recursion \cref{eq:slee} for $v^\ast_i$, we obtain
\begin{equation} \label{eq:Betty}
  (n - 2 z) v^\ast_i(\theta_z) = \frac{(i + 1) (n - i)}{2 n - 2 i - 1} v^\ast_{i + 1}(\theta_z) + \frac{(n - i + 1) (2 n - i + 2)}{2 n - 2 i + 3} v^\ast_{i - 1}(\theta_z),
\end{equation}
with $v^\ast_{-1}(\theta_z) = 0$ and $v^\ast_0(\theta_z) = 1$. Thus, the polynomials $v^\ast_i(\theta_z) \in \QQ(n)[z]$ are uniquely determined by the recursion \cref{eq:Betty} and the initial conditions.

Let
\[
  \Phi'_s(z) := \frac{2^{2 s} }{s!} \frac{(n - 1 / 2)^{\underline{s}} }{n^{\underline{s}}} \Phi_s(z) = \sum_{i = 0}^s \alpha_{s, i} (-1)^{s - i} z^{\underline{s - i}} \in \QQ(n)[z],
\]
where
\[
  \alpha_{s, i} := \frac{2^{2 s - i} {s \choose i}}{s!} \frac{(n - 1 / 2)^{\underline{s - \lfloor i / 2 \rfloor}} (n - s + i - 1)^{\underline{\lfloor i / 2 \rfloor}} }{n^{\underline{s - i}}} \in \QQ(n).
\]

Note that $v^\ast_s(\theta_z) = \Psi^\ast_s(\theta_z) - \Psi^\ast_{s - 2}(\theta_z)$. In order to prove that $\Psi^\ast_s(\theta_z) = \Phi'_s(z)$, it suffices to prove that $\Phi'_s(z) - \Phi'_{s - 2}(z)$ also satisfies the recursion \cref{eq:Betty} for $v^\ast_s(\theta_z)$. In other words, if suffices to prove that, for each $s$,
\begin{equation} \label{eq:noteless}
\begin{aligned}
& (n - 2 z) (\Phi'_s(z) - \Phi'_{s - 2}(z)) \\
= & \frac{(s + 1) (n - s)}{2 n - 2 s - 1} (\Phi'_{s + 1}(z) - \Phi'_{s - 1}(z)) + \frac{(n - s + 1) (2 n - s + 2)}{2 n - 2 s + 3} (\Phi'_{s - 1}(z) - \Phi'_{s - 3}(z))
\end{aligned}
\end{equation}
as elements in $\QQ(n)[z]$.

The set $\{(-1)^t z^{\underline{t}} : t \in \mathbb{N}\}$ is a basis of $\QQ[z]$, and
\[
  (n - 2 z) \left( (-1)^t z^{\underline{t}} \right) = 2 \left( (-1)^{t + 1} z^{\underline{t + 1}} \right) + (n - 2 t) \left( (-1)^t z^{\underline{t}} \right).
\]
By taking the coefficient of $(-1)^t z^{\underline{t}}$ in \cref{eq:noteless} and then divide by $\alpha_{s, s - t}$, we see that \cref{eq:noteless} holds for all $s$, if and only if, for all $s$ and $t$,
\begin{equation} \label{eq:unprecipitate}
\begin{aligned}
& 2 (\frac{\alpha_{s, s - t + 1}}{\alpha_{s, s - t}} - \frac{\alpha_{s - 2, s - t - 1}}{\alpha_{s, s - t}}) + (n - 2 t) (1 - \frac{\alpha_{s - 2, s - t - 2}}{\alpha_{s, s - t}}) \\
= & \frac{(s + 1) (n - s)}{2 n - 2 s - 1} (\frac{\alpha_{s + 1, s - t + 1}}{\alpha_{s, s - t}} - \frac{\alpha_{s - 1, s - t - 1}}{\alpha_{s, s - t}}) \\
& + \frac{(n - s + 1) (2 n - s + 2)}{2 n - 2 s + 3} (\frac{\alpha_{s - 1, s - t - 1}}{\alpha_{s, s - t}} - \frac{\alpha_{s - 3, s - t - 3}}{\alpha_{s, s - t}})
\end{aligned}
\end{equation}
as elements in $\QQ(n)$.

For $s$ and $t$ such that $s - t$ is even, every $\alpha$ quotient in \cref{eq:unprecipitate} is in $\QQ(n, s, t)$, thus both sides of \cref{eq:unprecipitate} are actually rational functions in $\QQ(n, s, t)$. Similar is true when $s - t$ is odd. Therefore, by simply expanding both sides, \cref{eq:unprecipitate} is verified. Thus, \cref{eq:noteless} is true, and the result follows.
\end{proof}

Now, we combine \cref{thm:SCbound,prop:amphivorous} to prove \cref{thm:jointed}.

\begin{proof}[Proof of \cref{thm:jointed}]
By \cref{thm:SCbound}, the zeros of $\Psi^\ast_s$ are distinct dual eigenvalues $\theta^\ast_i$, $i \in S$ for some $s$-subset $S \subseteq \{1, \ldots, n - 1\}$. Thus, by \cref{prop:amphivorous}, The zeros of $\Phi_s$ are distinct integers $i$, $i \in S$.
\end{proof}

\section{Zeros of $\Phi_s(z)$} \label{sec:Ranzania}

The goal of this section is to determine when the zeros of $\Phi_s(z)$ are all integers. In \cref{sec:rokeage}, we show a technical result on intervals with no primes proved in \cite{Xiang2023}, which will be used in latter part of the section. In \cref{sec:wheezer}, we develop some technical results on binomial coefficients with half-integers. In \cref{sec:canescence}, we analyze the coefficients of $\Phi_s(z)$, namely the $p_{s, i}$'s. With the help of results in \cref{sec:rokeage,sec:wheezer}, we prove \cref{thm:psychophysiology}.

\subsection{Intervals with no primes} \label{sec:rokeage}

Let $\rho_s$ be the smallest positive integer $n$ such that there are no primes in the interval $(n, n + s - 1]_{\ZZ}$. Using the known estimates on prime gaps, \cite{Xiang2023} gives an estimate of $\rho_s$. For reader's convenience, we show the result here.

\begin{proposition}[{\cite[Proposition 5.3]{Xiang2023}}] \label{prop:dominated}
For $s \geq 288$, $\rho_{s} \geq 5000 s (14.6 + \ln s)^2 > 2,000,000 s$.
\end{proposition}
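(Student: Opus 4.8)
The plan is to reduce the statement about $\rho_s$ to an explicit upper bound on prime gaps and then solve the resulting inequality for the threshold. First I would unwind the definition: to prove $\rho_s \geq N$ with $N := 5000 s (14.6 + \ln s)^2$, it suffices to show that for every positive integer $n < N$ the interval $(n, n + s - 1]_{\ZZ}$ contains a prime. The binding instances are $n = p$ for $p$ prime: if $p$ is the largest prime not exceeding $n$ and its successor $p'$ satisfies $p' \leq p + s - 1$, then $n < p' \leq n + s - 1$, so $p'$ lies in the interval. Hence the whole statement is equivalent to the prime-gap assertion that every pair of consecutive primes $p < p'$ with $p < N$ satisfies $p' - p \leq s - 1$; that is, the largest prime gap below $N$ is at most $s - 1$.

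Next I would invoke an explicit ``prime in a short interval'' estimate of Dusart type: for all $x \geq x_0$ the interval $(x, x + \phi(x)]$ contains a prime, where $\phi$ is an explicit, eventually increasing bound such as $x / (c (\ln x)^2)$. Applying this at each prime $p < N$ bounds the gap $p' - p$ by $\phi(p)$, and since $\phi$ is increasing on the relevant range the maximal gap below $N$ is controlled by $\phi(N)$. The requirement $\phi(N) \leq s - 1$ is then a transcendental inequality relating $N$ and $s$; solving it (using $\ln N \approx \ln s$ for $N$ of the expected order), absorbing lower-order terms, and rounding the constants upward to obtain a clean statement yields exactly $N = 5000 s (14.6 + \ln s)^2$, with both $5000$ and $14.6$ inherited from the explicit constants in the short-interval estimate.

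Finally I would check the range and the second inequality. The hypothesis $s \geq 288$ is needed to place the relevant primes beyond the validity threshold $x_0$ of the short-interval estimate and to justify the constant-tracking. The inequality $5000 s (14.6 + \ln s)^2 > 2{,}000{,}000 s$ is elementary: it is equivalent to $(14.6 + \ln s)^2 > 400$, i.e.\ $14.6 + \ln s > 20$, i.e.\ $\ln s > 5.4$, i.e.\ $s > e^{5.4} \approx 221.4$, which holds comfortably for $s \geq 288$. The main obstacle is not the combinatorial reduction, which is routine, but the analytic input: one needs an explicit, unconditional prime-gap bound valid from an explicitly known point onward, together with careful bookkeeping of its constants through the algebra so that the final threshold lands on the stated form with the correct range $s \geq 288$. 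Since the statement is quoted from \cite{Xiang2023}, this is precisely where that reference's work is used.
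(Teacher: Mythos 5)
First, a structural point: the paper itself contains \emph{no} proof of this proposition --- it is imported verbatim from \cite[Proposition 5.3]{Xiang2023} (``we show the result here'' means ``we restate it''), so your proposal can only be compared with the strategy of that reference. Your opening reduction is correct and is indeed the right first move: $\rho_s \geq N$ is equivalent to saying that every prime $p < N$ is followed by a gap of length at most $s-1$, so one only needs to control maximal prime gaps below $N$. Your check of the second inequality is also fine: $5000\, s\,(14.6+\ln s)^2 > 2{,}000{,}000\, s$ amounts to $\ln s > 5.4$, and $288 > e^{5.4} \approx 221.4$.

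The genuine gap is in the analytic part, in two related places. The short-interval estimate you invoke (the relevant one in \cite{Dusart2018}, which is where the constant $5000$ comes from) asserts a prime in $\left(x, x\left(1+\frac{1}{5000\ln^2 x}\right)\right]$ only for $x \geq x_0 = 468{,}991{,}632$; it says nothing about $(n, n+s-1]$ for $n < x_0$, yet proving $\rho_s \geq N$ requires handling \emph{every} $n < N$, most of which lie below $x_0$. The only way to cover that range is a non-analytic, computational input: the verified tables of maximal prime gaps, namely that every prime below $x_0$ is followed by a gap $\leq 282$, and that the first gap $\geq 288$ occurs only after the prime $1{,}294{,}268{,}491$. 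You gesture at this when you say $s \geq 288$ ``places the relevant primes beyond $x_0$,'' but that implication \emph{is} the gap-table fact, and you never supply it. Moreover, this input is also needed to land on the constant $14.6$, so it cannot be dismissed as bookkeeping: from the Dusart bound alone, a gap of length $\geq s$ starting at a prime $p \geq x_0$ gives $p \geq 5000\, s \ln^2 p$ with only $\ln p \geq 20.0$, and bootstrapping this at $s = 288$ stalls at roughly $p \geq 5000\, s\,(14.5+\ln s)^2$, short of the stated bound; to get $14.6$ one needs $\ln\ln p \geq (14.6 - \ln 5000)/2 \approx 3.04$, i.e.\ $p \gtrsim 1.24 \times 10^9$, which for moderate $s$ (roughly $288 \leq s \leq 600$) is supplied exactly by the record $1{,}294{,}268{,}491$ --- and which is the true origin of the hypothesis $s \geq 288$. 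So the missing idea is the explicit use of computationally verified prime-gap records alongside the analytic estimate; without it the argument has a hole for all $n < x_0$ and cannot produce the stated constants.
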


\subsection{Binomial coefficients for half-integers} \label{sec:wheezer}

In this subsection, we study the binomial coefficients for half-intergers
\begin{equation} \label{eq:schistothorax}
  {n - 1 / 2 \choose m} = \frac{(2 n - 1)!!}{(2 (n - m) - 1)!! 2^m m!}.
\end{equation}
The $n$ in this subsection is just a natural number, which is not the parameter $n$ for the Johnson scheme $J(2 n, n)$.

For a prime $p$, let ${\rm val}_p$ be the $p$-adic valuation for rational numbers. In other words, it is the number of times that $p$ divides a given rational number. Below is a generlaization of Kummer's theorem to half-integers.

\begin{lemma} \label{lem:trimellitic}
Let $m$ and $n$ be natural numbers with $n \geq m$. Let $p$ be a prime. Then,
\[
  {\rm val}_p({n - 1 / 2 \choose m}) \leq \begin{cases}
    \lfloor \log_p (2 n - 1) \rfloor, & p \geq 3, \\
    - 2 m + \lfloor \log_2 (m + 1) \rfloor, & p = 2. \\
  \end{cases}
\]
\end{lemma}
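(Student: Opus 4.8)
The plan is to evaluate $\operatorname{val}_p$ of the explicit expression in \cref{eq:schistothorax} directly, treating $p = 2$ and odd $p$ separately. The guiding observation is that the two double factorials $(2n-1)!!$ and $(2(n-m)-1)!!$ are products of odd integers, hence units at the prime $2$ but carrying all the relevant $p$-adic content at odd primes; so the casework is forced by where the factor $2^m$ and these double factorials put their weight.

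For $p = 2$, since $(2n-1)!!$ and $(2(n-m)-1)!!$ have $2$-adic valuation $0$, the formula gives $\operatorname{val}_2\binom{n - 1/2}{m} = -m - \operatorname{val}_2(m!)$. By Legendre's formula $\operatorname{val}_2(m!) = m - s_2(m)$, where $s_2(m)$ is the number of $1$'s in the binary expansion of $m$, so $\operatorname{val}_2\binom{n - 1/2}{m} = -2m + s_2(m)$. It then remains to prove the elementary inequality $s_2(m) \le \lfloor \log_2(m+1)\rfloor$, which I would verify by setting $k = \lfloor\log_2(m+1)\rfloor$, so that $2^k \le m+1 < 2^{k+1}$, i.e. $2^k - 1 \le m \le 2^{k+1} - 2$, and observing that any integer in this range has at most $k$ binary ones. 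This gives the stated bound, with equality at $m = 2^k - 1$.

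For odd $p$, I would rewrite $(2n-1)!!/(2(n-m)-1)!! = \prod_{k = n-m+1}^{n}(2k-1)$, a product of $m$ consecutive odd integers, and use $\operatorname{val}_p(2^m) = 0$ to get $\operatorname{val}_p\binom{n - 1/2}{m} = \operatorname{val}_p\!\big(\prod_{k=n-m+1}^n (2k-1)\big) - \operatorname{val}_p(m!)$. Counting factors of each prime power, $\operatorname{val}_p\!\big(\prod_{k=n-m+1}^n (2k-1)\big) = \sum_{e \ge 1} N_e$, where $N_e := \#\{k : n-m+1 \le k \le n,\ 2k \equiv 1 \pmod{p^e}\}$; here I use that $2$ is invertible modulo $p^e$ for odd $p$, so the condition $p^e \mid 2k-1$ pins $k$ to a single residue class modulo $p^e$. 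Combined with $\operatorname{val}_p(m!) = \sum_{e \ge 1}\lfloor m/p^e\rfloor$, this telescopes to $\operatorname{val}_p\binom{n-1/2}{m} = \sum_{e \ge 1}\big(N_e - \lfloor m/p^e\rfloor\big)$.

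The argument then rests on two short claims I would record explicitly: (a) for each $e$, the number $N_e$ of integers in a fixed residue class within a window of $m$ consecutive integers lies in $\{\lfloor m/p^e\rfloor, \lceil m/p^e\rceil\}$, so each summand $N_e - \lfloor m/p^e\rfloor \in \{0,1\}$; and (b) the summand vanishes as soon as $p^e > 2n-1$, since then no odd number in $(0, 2n-1]$ is divisible by $p^e$ (forcing $N_e = 0$), while $n \ge m$ gives $p^e > 2m - 1 \ge m$ (forcing $\lfloor m/p^e\rfloor = 0$). Hence only the values $1 \le e \le \lfloor\log_p(2n-1)\rfloor$ contribute, each by at most $1$, yielding $\operatorname{val}_p\binom{n-1/2}{m} \le \lfloor\log_p(2n-1)\rfloor$; the case $m = 0$ is trivial. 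The main technical point is exactly the bookkeeping in (a)--(b): once the valuation is written as the telescoped sum over $e$, the bound becomes a clean ``at most one carry per relevant prime power'' count, in the spirit of Kummer's theorem, and no further estimates are needed.
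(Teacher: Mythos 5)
Your proof is correct and takes essentially the same route as the paper's: both cases boil down to Kummer-style counting over prime powers $p^e$, showing each contributes at most $1$ to the valuation and that only those with $p^e \le 2n-1$ can contribute at all, which gives the $\lfloor \log_p(2n-1)\rfloor$ bound (and for $p=2$ both arguments hinge on the same fact ${\rm val}_2(m!) = m - s_2(m) \ge m - \lfloor\log_2(m+1)\rfloor$). The only organizational difference is that for odd $p$ you count multiples of $p^e$ directly among the $m$ consecutive odd factors (a residue class in a window of length $m$), whereas the paper first derives a Legendre-type formula ${\rm val}_p((2n-1)!!)=\sum_{i\ge 1}\bigl\lceil \tfrac12 \lfloor (2n-1)/p^i \rfloor \bigr\rceil$ for double factorials and then bounds the resulting floor/ceiling differences; these are the same computation packaged differently.
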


\begin{proof}
When $p = 2$, the result follows from \cref{eq:schistothorax} and the fact that
\[
{\rm val}_2( m! ) \geq m - \lfloor \log_2 (m + 1) \rfloor.
\]
Now, assume that $p \geq 3$.

The Legendre's formula for factorials says
\begin{equation} \label{eq:semicorneous}
  {\rm val}_p (n!) = \sum_{i = 1}^\infty \left\lfloor \frac{n}{p^i} \right\rfloor.
\end{equation}
Following the idea of its proof, we obtain a generalization for double factorials as follows:
\begin{equation} \label{eq:Ashkenazic}
  {\rm val}_p ((2 n - 1)!!) = \sum_{i = 1}^\infty \left\lceil \frac{1}{2} \left\lfloor \frac{2 n - 1}{p^i} \right\rfloor \right\rceil.
\end{equation}
Substituting \cref{eq:semicorneous,eq:Ashkenazic} into \cref{eq:schistothorax},
\begin{align*}
{\rm val}_p({n - 1 / 2 \choose m})
= & {\rm val}_p( (2 n - 1)!! ) - {\rm val}_p( (2 (n - m) - 1)!! ) - {\rm val}_p( m! ) \\
= & \sum_{i = 1}^\infty \left( \left\lceil \frac{1}{2} \left\lfloor \frac{2 n - 1}{p^i} \right\rfloor \right\rceil - \left\lceil \frac{1}{2} \left\lfloor \frac{2 (n - m) - 1}{p^i} \right\rfloor \right\rceil - \left\lfloor \frac{m}{p^i} \right\rfloor \right), \\
= & \sum_{i = 1}^\infty \left( \left\lceil \frac{1}{2} \left\lfloor a_i + b_i \right\rfloor \right\rceil - \left\lceil \frac{1}{2} \left\lfloor a_i \right\rfloor \right\rceil - \left\lfloor \frac{1}{2} b_i \right\rfloor \right),
\end{align*}
where
\[
a_i := \frac{2 (n - m) - 1}{p^i} \quad \text{and} \quad b_i := \frac{2 m}{p^i}.
\]
The result follows from the fact that each summand is either $0$ or $1$, and that the summand is $0$ unless $a_i + b_i \geq 1$. 
\end{proof}

Now, we analyze when the binomial coefficients for half-integers have only small prime factors. We say that a rational number $n$ is {\em $b$-smooth} if ${\rm val}_p(n) \leq 0$ for all primes $p \gt b$.

\begin{proposition} \label{prop:zinziberaceous}
Let $n$ and $m$ be natural numbers with $n \geq m$. If ${n - 1 / 2 \choose m}$ is $(4 m + 1)$-smooth, then either $m \leq 142$, or $n \leq 3 m - 1$.
\end{proposition}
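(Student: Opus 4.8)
The plan is to reduce the smoothness hypothesis to a statement about an interval of consecutive odd integers, and then handle moderate and large $n$ by two different tools. Writing $\binom{n - 1/2}{m} = \frac{\prod_{j = n - m + 1}^{n} (2 j - 1)}{2^m m!}$ as in \cref{eq:schistothorax}, the denominator $2^m m!$ has no prime factor exceeding $m$. Hence for every prime $p > 4 m + 1$ we have ${\rm val}_p(\binom{n - 1/2}{m}) = {\rm val}_p\big(\prod_{j} (2 j - 1)\big)$, so $\binom{n - 1/2}{m}$ is $(4 m + 1)$-smooth if and only if each of the $m$ consecutive odd numbers $2 j - 1$, $n - m + 1 \le j \le n$, is $(4 m + 1)$-smooth. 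Assuming $m \ge 143$ and $n \ge 3 m$ (the complement of the claimed conclusion), the smallest such number is $2(n - m) + 1 \ge 4 m + 1$, so I have $m$ consecutive odd integers, all at least $4 m + 1$, all $(4 m + 1)$-smooth; the goal is to reach a contradiction.

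For moderate $n$ I would use \cref{prop:dominated}. Since a prime $p > 2$ is odd, any prime lying in the integer interval $(2(n - m), 2 n - 1]$ is one of the numbers $2 j - 1$, and because $n \ge 3 m$ such a prime satisfies $p \ge 2(n - m) + 1 \ge 4 m + 1$. Applying \cref{prop:dominated} to this interval (or to its part above $4m+1$, to force $p > 4m+1$ at the boundary $n = 3m$), whose length is about $2 m$, guarantees a prime in it as long as $2(n - m)$ stays below $\rho_{2 m}$, i.e.\ for all $n$ up to roughly $10^6 m$; such a prime factor exceeds $4m+1$ and contradicts smoothness. This is where the hypothesis $m \ge 143$ enters, through the range of validity $s \ge 288$ of \cref{prop:dominated}.

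For large $n$, beyond the reach of the prime-gap argument, I would instead compare sizes using the valuation bound \cref{lem:trimellitic}. If $\binom{n - 1/2}{m}$ is $(4 m + 1)$-smooth then $\ln \binom{n - 1/2}{m} = \sum_{p \le 4 m + 1} {\rm val}_p(\binom{n - 1/2}{m}) \ln p$. The $p = 2$ term is at most $(-2 m + \log_2(m + 1)) \ln 2$, which is strongly negative, while each odd prime contributes ${\rm val}_p \ln p \le \lfloor \log_p(2 n - 1) \rfloor \ln p \le \ln(2 n - 1)$; hence the sum is at most $(\pi(4 m + 1) - 1)\ln(2 n - 1) - (2 m - \log_2(m + 1)) \ln 2$, where $\pi$ is the prime-counting function. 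On the other hand the product formula gives the lower bound $\ln \binom{n - 1/2}{m} \ge m \ln(2(n - m) + 1) - m \ln 2 - \ln(m!)$. Since $\pi(4 m + 1) < m$ for $m \ge 143$ and $2(n - m) + 1 \ge \tfrac{2}{3}(2 n - 1)$ when $n \ge 3 m$, the coefficient of $\ln(2 n - 1)$ is strictly larger on the lower-bound side, so the lower bound overtakes the upper bound once $n$ is large, forcing an explicit upper bound on $n$ and hence a contradiction. Refining the prime bound by noting that ${\rm val}_p \le 1$ whenever $p > \sqrt{2 n - 1}$ sharpens the admissible threshold on $n$.

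The main obstacle is making the two regimes meet: I must verify that the explicit $n$-threshold produced by the size argument falls within the range in which \cref{prop:dominated} still supplies a prime, for every $m \ge 143$ simultaneously. This requires careful bookkeeping of constants — the savings from the $p = 2$ term, the factor $\tfrac{2}{3}$, the gap between $\pi(4 m + 1)$ and $m$, and the explicit form $\rho_s \ge 5000 s(14.6 + \ln s)^2$ — and it is also what produces the precise cutoff $m \le 142$, $n \le 3 m - 1$ rather than a cruder bound.
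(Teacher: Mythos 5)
Your proposal follows essentially the same route as the paper's proof: the paper likewise combines the valuation bound of \cref{lem:trimellitic} with a Stirling-type lower bound to force $n \leq 2{,}000{,}000\,m$, and then applies \cref{prop:dominated} to the block of $m$ consecutive non-prime odd factors $2n-2m+1,\dots,2n-1$ (extended by the flanking even numbers to an interval of length $2m+2 \geq 288$, which is exactly what covers the boundary case $m=143$ that your length-$2m$ interval would miss) to get $2n-2m-1 \geq \rho_{2m+2} > 4{,}000{,}000\,m$, contradicting the first bound. The ``main obstacle'' you flag --- making the two regimes meet --- is precisely where the paper invests its effort, closing the constants with Dusart's explicit estimate $\pi(x) \leq \frac{x}{\ln x}\left(1+\frac{1.2762}{\ln x}\right)$; otherwise your outline is the paper's argument.
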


\begin{proof}
Assume in contrary that $m \geq 143$ and $n \geq 3 m$.

We first give an upper bound for $\ln {n - 1 / 2 \choose m}$ using its $(4 m + 1)$-smoothness property. Note that the $p$ in the calculation below runs over prime numbers, and $\pi$ is the prime counting function.
\begin{align*}
& \ln {n - 1 / 2 \choose m} = \sum_{p \leq 4 m + 1} {\rm val}_p({n - 1 / 2 \choose m}) \ln p \\
\leq & \left(- 2 m + \lfloor \log_2 ( m + 1 ) \rfloor\right) \ln 2 + \sum_{3 \leq p \leq 4 m + 1} \left\lfloor \log_p (2 n - 1) \right\rfloor \ln p & \text{(by \cref{lem:trimellitic})} \\
\leq & - 2 m \ln 2 + \ln (m + 1) + (\pi(4 m + 1) - 1) \ln (2 n - 1) \\
\leq & - 2 m \ln 2 + \pi(4 m + 1) \ln (2 n - 1) & \text{(since $2 n - 1 \geq m + 1$)} \\
\leq & \pi(4 m + 1) \ln (n - 1 / 2). & \text{(since $\pi(4 m + 1) \leq 2 m$)}
\end{align*}

We then give a lower bound for $\ln {n - 1 / 2 \choose m}$ using Stirling's formula. Recall that Stirling's formula says
\begin{equation} \label{eq:ethylenic}
  (n + 1 / 2) \ln n - n < \ln n! \leq (n + 1 / 2) \ln n - n + 1.
\end{equation}
for all real $n \geq 1$. We proceed with the following calculation.
\begin{align*}
& \ln {n - 1 / 2 \choose m} \\
\geq & \left(n \ln (n - 1 / 2) - (n - 1 / 2) \right) \\
& - \left((n - m) \ln (n - m - 1 / 2) - (n - m - 1 / 2) + 1\right) & \text{(by \cref{eq:schistothorax,eq:ethylenic})} \\
& - \left((m + 1 / 2) \ln m - m + 1\right) \\
= & n \ln (n - 1 / 2) - (n - m) \ln (n - m - 1 / 2) - (m + 1 / 2) \ln m - 2 \\
= & m (\ln (n - 1 / 2) - \ln m) + (n - m) \ln (1 + \frac{m}{n - m - 1 / 2}) - \frac{1}{2} \ln m - 2\\
\geq & m (\ln (n - 1 / 2) - \ln m) + 2 m \ln \frac{3}{2} - \frac{1}{2} \ln m - 2 & \text{(since $n \geq 3 m$)}\\
\geq & m (\ln (n - 1 / 2) - \ln m + \ln 2). & \text{(since $m \geq 32$)}
\end{align*}

Combining the upper bound and the lower bound for $\ln {n - 1 / 2 \choose m}$,
\begin{align*}
  & \ln (n - 1 / 2) \\
  \leq & \left(1 - \frac{1}{m} \pi(4 m + 1)\right)^{-1} (\ln m - \ln 2) & \text{(since $\pi(4 m + 1) < m$ for $m \geq 31$)} \\
  \leq & \left(1 - \frac{1}{m} \frac{4 m + 1}{\ln (4 m + 1)} (1 + \frac{1.2762}{\ln (4 m + 1)})\right)^{-1} (\ln m - \ln 2) & \text{(estimate of $\pi$ in \cite[Corollary 5.2]{Dusart2018})} \\
  \leq & \ln 2,000,000 m. & \text{(since $m \geq 116$)}
\end{align*}
In other words, $n \leq 2,000,000 m$.

On the other hand,
\[
  {n - 1 / 2 \choose m} = \frac{(2 n - 2 m + 1) (2 n - 2 m + 3) \ldots (2 n - 1)}{(2 m)!!}.
\]
Since it is $(4 m - 1)$-smooth and $2 n - 2 m + 1 \geq 4 m + 1$, neither of $2 n - 2 m + 1, \dots, 2 n - 1$ could be a prime. Therefore, since $m \geq 143$, by \cref{prop:dominated},
\[
  2 n - 2 m - 1 \geq \rho_{2 m + 2} \gt 4,000,000 m.
\]
Thus, $n > 2,000,000 m$, which contradict with $n \leq 2,000,000 m$ above.
\end{proof}

\begin{remark}
For the cases $31 \leq m \leq 142$, it is possible to improve \cref{prop:zinziberaceous} by using better estimates. But, \cref{prop:zinziberaceous} is already good enough for our purposes.
\end{remark}

\subsection{Proof of \cref{thm:psychophysiology}} \label{sec:canescence}

We begin with a technical result.

\begin{proposition} \label{prop:unensured}
Let $s, r$ be positive integers numbers with $s \geq 2$. Suppose that $p_{s, 2}$ is an integer. Then, $r < \frac{3}{4} s^2$.
\end{proposition}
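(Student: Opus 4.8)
The plan is to compute $p_{s,2}$ in closed form, reduce the integrality hypothesis to a single divisibility statement about $2r+1$, and then extract the bound from a parity observation. Specializing the definition at $i = 2$ (where $\lfloor i/2\rfloor = 1$, so the falling factorial $(r+1)^{\underline{1}}=r+1$, the rising factorial $(r+1)^{\overline{2}}=(r+1)(r+2)$, and $(r+1/2)^{\overline{1}}=r+1/2$) and rewriting $r+1/2 = (2r+1)/2$ gives
\[
 p_{s,2} = \frac{s(s-1)}{4}\cdot\frac{(r+1)^2(r+2)}{2r+1}.
\]
So the task becomes: understand when $4(2r+1)$ can divide $s(s-1)(r+1)^2(r+2)$, where the only delicate factor is the odd number $2r+1$.

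First I would record the relevant gcds. Since $2r+1 = 2(r+1)-1$, we have $\gcd(2r+1, r+1)=1$, hence $2r+1$ is coprime to $(r+1)^2$; and since $2r+1 = 2(r+2)-3$, we get $\gcd(2r+1, r+2)\mid 3$, with equality exactly when $r \equiv 1 \pmod 3$. Writing $g := \gcd(2r+1, (r+1)^2(r+2)) = \gcd(2r+1,r+2) \in \{1,3\}$, one checks that $\frac{(r+1)^2(r+2)}{2r+1}$ reduces to lowest terms after cancelling $g$, with reduced denominator $(2r+1)/g$. Because $(2r+1)/g$ is odd (hence coprime to $4$) and coprime to the reduced numerator, integrality of $p_{s,2}$ forces the clean divisibility $(2r+1)/g \mid s(s-1)$.

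The final step, and the one point that actually requires care, is to turn this divisibility into the sharp constant $3/4$. The crude estimate $(2r+1)/g \leq s(s-1)$ only yields $r < \tfrac{3}{2}s^2$ when $g=3$, which is too weak. The improvement comes from parity: the integer $(2r+1)/g$ is odd, whereas $s(s-1)$ is a product of consecutive integers and hence even, so $(2r+1)/g$ divides the odd part of $s(s-1)$ and therefore satisfies $(2r+1)/g \leq s(s-1)/2$. Consequently $2r+1 \leq g\,s(s-1)/2 \leq \tfrac{3}{2}s(s-1)$, which gives $r \leq \tfrac{3s(s-1)-2}{4} < \tfrac{3}{4}s^2$, as claimed. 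I expect the gcd and coprimality bookkeeping in the case $g=3$ to be the only genuinely fiddly piece: one must confirm that $(2r+1)/3$ and $(r+2)/3$ share no common prime, which reduces to the impossibility of $9\mid 2r+1$ and $9\mid r+2$ holding simultaneously (as these would force $9\mid 2(r+2)-(2r+1)=3$). The parity recovery of the missing factor of two is the key idea that sharpens the bound to $\tfrac{3}{4}s^2$.
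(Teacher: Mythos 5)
Your proof is correct, and at its core it is the same argument as the paper's: reduce to the closed form $p_{s,2}=\frac{s(s-1)}{4}\cdot\frac{(r+1)^2(r+2)}{2r+1}$, show that integrality forces $2r+1$ (up to your gcd factor $g\le 3$) to divide $\frac{3}{2}s(s-1)$, and conclude $2r+1\le\frac{3}{2}s(s-1)$, hence $r<\frac{3}{4}s^2$. The difference is purely in mechanics. The paper extracts the divisibility in one stroke from the Euclidean-division identity
\[
16\, p_{s,2}=\binom{s}{2}\left(4r^2+14r+13+\frac{3}{2r+1}\right),
\]
whose remainder term immediately gives $(2r+1)\mid 3\binom{s}{2}$; note that writing the coefficient as the integer $\binom{s}{2}$ pre-absorbs the factor of $2$ that you instead recover with your parity observation (since you kept $s(s-1)/4$). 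Your route replaces the polynomial division by explicit gcd bookkeeping, namely $\gcd(2r+1,r+1)=1$ and $\gcd(2r+1,r+2)\mid 3$, which is the same fact as ``the remainder equals $3$'' in disguise: both come from $2(r+2)-(2r+1)=3$. One small simplification: the coprimality check you flag as the fiddly piece --- that $(2r+1)/g$ is coprime to the reduced numerator --- is automatic, since dividing a numerator and denominator by their gcd always leaves a fraction in lowest terms; no case analysis involving $9\mid 2r+1$ is needed.
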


\begin{proof}
The $p_{s, 2}$, treated as a rational function in $r$, admits an expansion
\[
  16 p_{s, 2} = {s \choose 2} \left(4 r^2 + 14 r + 13 + \frac{3}{2 r + 1}\right),
\]
from which the result follows.
\end{proof}

Now, we combine the results in \cref{sec:wheezer} to prove \cref{thm:psychophysiology}.

\begin{proof}[Proof of \cref{thm:psychophysiology}]
Assume in contrary that $s \geq 2$ and $r \geq 2$. Recall that
\[
  p_{s, s} := \frac{ (r + s - 1)^{\underline{\lfloor s / 2 \rfloor}} (r + 1)^{\overline{s}} }{ 2^s (r + 1 / 2)^{\overline{\lfloor s / 2 \rfloor}} }.
\]
Let
\[
f := (x + s - 1)^{\underline{\lfloor s / 2 \rfloor}} (x + 1)^{\overline{s}} \in \ZZ[x] \quad \text{and} \quad g := 2^s (x + 1 / 2)^{\overline{\lfloor s / 2 \rfloor}} \in \ZZ[x].
\]
Then, $(f / g)|_{x = r} = p_{s, s}$.

As polynomials in $x$, the resultant of $f$ and $g$ has only prime factors at most $2 s - 1$. Since the resultant ${\rm res}_x(f, g)$ is a $\ZZ[x]$-linear combination of $f$ and $g$, the quotient ${\rm res}_x(f, g) / g$ is a $\ZZ[x]$-linear combination of the quotient $f / g$ and $1$. Speciallizing to $x = r$ implies that the quotient
\[
  q := \frac{{\rm res}_x(f, g)}{2^s (r + 1 / 2)^{\overline{\lfloor s / 2 \rfloor}}} = \frac{{\rm res}_x(f, g)}{2^s \lfloor s / 2 \rfloor! {r + \lfloor s / 2 \rfloor - 1 / 2 \choose \lfloor s / 2 \rfloor}} \in \QQ
\]
is a $\ZZ$-linear combination of $1$ and the specialization $(f / g)|_{x = r} = p_{s, s}$. Since $p_{s, s}$ is an integer, $q$ is also an integer.

Recall that the numerator of $q$, which is the resultant of $f$ and $g$, has only prime factors at most $2 s - 1$, so does the denominator of $q$. Thus,
\[
  {r + \lfloor s / 2 \rfloor - 1 / 2 \choose \lfloor s / 2 \rfloor}
\]
is an $(2 s - 1)$-smooth rational number. Since $2 s - 1 \leq 4 \lfloor s / 2 \rfloor + 1$, by \cref{prop:zinziberaceous}, either $s \leq 285$, or $r \leq s - 1$.

\noindent{\bf Case 1. $2 \leq s \leq 285$}

Since $p_{s, 2}$ is an integer, by \cref{prop:unensured}, $r \leq \frac{3}{4} s^2$. A computer search shows that for $2 \leq s \leq 285$ and $2 \leq r \leq \frac{3}{4} s^2$, the numbers $p_{s, 1}, \dots, p_{s, s}$ are never all integers at the same time.

\noindent{\bf Case 2. $2 \leq r \leq s - 1$}

We rewrite $p_{s, s}$ as follows:
\[
  p_{s, s} = \frac{(r + 1) \cdots (2 r - 1) \cdot r \cdots (r + s - 1)}{(r + s + 1) \ldots (2 r + s - 1)}.
\]
Since $p_{s, s}$ is an integer, neither of the numbers $r + s + 1, \ldots 2 r + s - 1$ could be a prime. Therefore, by \cref{prop:dominated}, $r + s \geq \rho_r$, hence $s \geq \rho_r - r$.

\noindent{\bf Case 2.1 $2 \leq r \leq 287$}

It suffices to prove that $s \geq 5000 r (14.5 + \ln r)^2$. This can be done by a computer search over $2 \leq r \leq 287$ and $s \leq 5000 r (14.5 + \ln r)^2$. The numbers $p_{s, 1}, \dots, p_{s, s}$ are never all integers at the same time.

\noindent{\bf Case 2.2 $r \geq 288$}

In this case, $s \geq \rho_r - r \geq 5000 r (14.6 + \ln r)^2 - r \geq 5000 r (14.5 + \ln r)^2$.
\end{proof}

\printbibliography

\end{document}